\def\RR{\mathbb{R}}
\def\div{\mathop{\rm div}}
\def\Th{\mathcal{T}_h}
\def\eps{\varepsilon}
\author{Herbert Egger}
\address{Department of Mathematics, TU Darmstadt, Germany}
\title[Energy norm estimates for parabolic problems]{Energy norm error estimates for finite element discretization of parabolic problems}
\newtheorem{lemma}{Lemma}
\newtheorem{problem}[lemma]{Problem}
\newtheorem{theorem}[lemma]{Theorem}
\begin{document}

\begin{abstract} 
We consider the discretization of parabolic initial boundary value problems by finite element methods in space and a Runge-Kutta time stepping scheme. Order optimal a-priori error estimates are derived in an energy-norm under natural smoothness assumptions on the solution and without artificial regularity conditions on the parameters and the domain. The key steps in our analysis are the careful treatment of time derivatives in the $H^{-1}$-norm and the the use of an $L^2$-projection in the error splitting instead of the Ritz projector. This allows us to restore the optimality of the estimates with respect to smoothness assumptions on the solution and to avoid artificial regularity requirements for the problem, usually needed for the analysis of the Ritz projector, which limit the applicability of previous work. The wider applicability of our results is illustrated for two irregular problems, for which previous results can either not by applied or 
yield highly sub-optimal estimates.
\end{abstract}

\maketitle

\begin{quote}
\noindent 
{\small {\bf Keywords:} 
parabolic problems, Galerkin distretization, finite element method, backward Euler scheme, energy-norm estimates}
\end{quote}

\begin{quote}
\noindent
{\small {\bf AMS-classification:} 35K20, 65L06, 65L70, 65M15, 65M20, 65M60}
\end{quote}

\section{Introduction}

Let $\Omega \subset \RR^d$ be a bounded domain and $T>0$ denote a time horizon. 
We consider the numerical solution of parabolic initial boundary value problems of the form 
\begin{subequations} \label{eq:1}
\begin{align} 
&&&& u'(t)  + A(t) u(t) &= f(t),  && \text{in } \Omega,    \quad  0<t<T, &&&&\label{eq:1a}\\
&&&&               u(t) &= 0,     && \text{on } \partial\Omega, \ 0<t<T, &&&&\label{eq:1b}\\
&&&&               u(0) &= u_0,   && \text{in } \Omega. &&&&\label{eq:1c} 
\end{align}
\end{subequations}
Here $u'$ is the time derivative and $Au=-\div(a \nabla u) + b \cdot \nabla u + c u$ is a second order differential operator with coefficients $a,b,c \in L^\infty$ and $a(x,t) \ge \underline{a}>0$, such that $A(t)$ is uniformly elliptic for every point in time.  
It is well known, that for any $u_0 \in L^2(\Omega)$ and any $f \in L^2(0,T;H^{-1}(\Omega))$ there exists a unique weak solution in the \emph{energy-space}
\begin{align} \label{eq:W}
W(0,T) = \{ u \in L^2(0,T;H_0^1(\Omega) :  u' \in L^2(0,T;H^{-1}(\Omega))\},
\end{align}
which can be bounded a-priori by
\begin{align} \label{eq:apriori}
\|u\|_{W(0,T)} 
&=\|u\|_{L^2(0,T;H^1_0(\Omega))} + \| u'\|_{L^2(0,T;H^{-1}(\Omega))} \\
&\le C \big( \|f\|_{L^2(0,T;H^{-1}(\Omega))} + \|u_0\|_{L^2(\Omega)}\big).
\end{align}
Moreover, the constant $C$ only depends on the domain and the bounds for the coefficients.
We will refer to $\|\cdot\|_{W(0,T)}$ as the \emph{energy-norm} of the problem.
If the coefficients and the data are sufficiently smooth, 
the solution of \eqref{eq:1a}-\eqref{eq:1c} can be expected to be more regular:
For instance, if one assumes that $a,b,c \in W^{1,\infty}$, then 
\begin{align} \label{eq:reg1}
&\|u'\|_{L^2(0,T;H_0^1(\Omega))} + \|u''\|_{L^2(0,T;H^{-1}(\Omega))} \\
&\qquad \qquad \qquad \qquad 
\le C \big( \|f'\|_{L^2(0,T;H^{-1}(\Omega))} + \|f(0) - A(0) u_0\|_{L^2(\Omega)}\big),  \notag
\end{align}
whenever the right hand side is bounded. 
%
If, in addition, also the domain is sufficiently regular and $u_0 \in H_0^1(\Omega)$, then 
\begin{align} \label{eq:reg2}
&\|u\|_{L^2(0,T;H^2(\Omega))} + \|u'\|_{L^2(0,T;L^2(\Omega))} 
\le C \big( \|f\|_{L^2(0,T;L^2(\Omega))} + \|u_0\|_{H^1_0(\Omega)}\big). 
\end{align}
We refer to \cite{Evans} for details and proofs of these estimates and further results.

Let us emphasize at this point that the two estimates \eqref{eq:reg1} and \eqref{eq:reg2} also characterize the basic regularity spaces for parabolic problems. In particular, time derivatives typically lack two order of spatial regularity compared to the solution itself. 

\medskip 

In this paper, we study the numerical solution of \eqref{eq:1a}-\eqref{eq:1c}
by finite element discretization in space and Runge-Kutta time stepping schemes.
Such approximations have been investigated intensively in the literature, see e.g. 
\cite{Dupont74,DouglasDupont70,DouglasDuponWheeler,Thomee74,Varga,Wheeler73}; 
we refer to \cite{Thomee} for  a comprehensive collection of results and further references.
Our main goal here is to derive order optimal a-priori error estimates 
in the energy-norm $\| \cdot \|_{W(0,T)}$. These can be obtained
under minimal regularity assumptions on the coefficients and the domain, 
and we only require natural smoothness conditions for the solution.
To keep the notation simple, we consider here in detail only the lowest order approximation 
by piecewise linear finite elements in space and the backward Euler method in time. 
The generalization of our arguments to approximations of higher order is however straight forward.

To put our results into perspective, let us shortly recall some of the standard results for the Galerkin approximation of parabolic problems \cite{Thomee,Varga,Wheeler73} and compare them with the main contributions of our manuscript: 
 
\medskip 
 
As a first step, we will consider the semi-discretization in space 
by piecewise linear finite elements. 
The standard lowest order error estimate for parabolic problems then reads 
\begin{align} \label{eq:ee1}
\|u-u_h\|_{L^\infty(0,T;L^2(\Omega))} 
\le C h \big( \|u_0\|_{H^1(\Omega)} + \|u'\|_{L^1(0,T;H^1(\Omega))} \big).
\end{align}
Here $u_h$ is the finite element approximation and $h$ the meshsize.
This estimate amounts to \cite[Theorem~1.2]{Thomee} with $r=1$.
Assuming sufficient regularity of the domain, the parameters, and the solution, 
also second order convergence can be obtained here. 
The basic step in the proof of this error estimate is the decomposition of the error into
\begin{align} \label{eq:decomp}
u-u_h = (u-R_h u) + (R_h u - u_h), 
\end{align}
where $R_h$ denotes the Ritz projection associated to the operator $A$; see \cite{Varga,Wheeler73}.
The proof of the error bound then relies on a discrete energy estimate for the semi-discrete problem 
and certain properties of the Ritz projection, in particular, on a bound 
\begin{align} \label{eq:ritz}
\|u - R_h u\|_{L^2(\Omega)} \le C h \|u\|_{H^1(\Omega)},
\end{align} 
for the $L^2$-norm error, which can be obtained by the usual duality arguments \cite{Aubin67,Nitsche68}.
Without some additional regularity assumptions on the parameters and the domain, 
the estimate \eqref{eq:ritz} for the Ritz projection is however not valid, 
and therefore the validity of the bound \eqref{eq:ee1} for irregular problems cannot 
be granted by the proofs given in \cite{Thomee,Varga,Wheeler73}.
Also note that the estimate \eqref{eq:ee1} is somwhat sub-optimal concerning the regularity requirements 
for the solution: 
In fact, it should suffice to assume that $u \in L^\infty(0,T;H^1(\Omega))$, 
which is already valid if $u \in L^2(0,T;H^2(\Omega)) \cap H^1(0,T;L^2(\Omega))$. 
Let us emphasize that no spatial regularity for the time derivative seems to be required. 
Also, in view of the a-priori estimate \eqref{eq:reg2}, this latter condition would be a natural regularity assumption.

As a replacement for \eqref{eq:ee1}, we will derive an error estimate 
in the energy-norm $\|\cdot\|_{W(0,T)}$, which arises in the a-priori estimate for the solution. We will show that
\begin{align} \label{eq:res1}
\|u-u_h\|_{W(0,T)}
&=\|u-u_h\|_{L^2(0,T;H^1(\Omega))} + \| u' -  u'_h\|_{L^2(0,T;H^{-1}(\Omega))} \notag \\
&\le C h \big( \|u\|_{ L^2(0,T;H^2(\Omega))} + \| u'\|_{L^2(0,T;L^2(\Omega))} \big). 
\end{align}
By continuous embedding, one has $\|u-u_h\|_{L^\infty(0,T;\Omega)} \le C \|u - u_h\|_{W(0,T)}$, 
which yields the corresponding estimate also for the error in the $L^2$-norm at every point in time.
One can easily see that the estimate \eqref{eq:res1} is optimal with respect to the approximation properties of the finite element spaces and also with respect to the smoothness requirements on the solution, which are natural for the problem under investigation. 
Also note that we require no additional regularity of the domain or the parameters
for the proof of this result.

\medskip 

As a second step, we will then investigate the time discretization by the backward Euler  method. 
The standard error estimate for the fully discrete approximation reads
\begin{align} \label{eq:ee2}
\max_{0 \le t^n \le T}  \|u(t^n) - u_h^n\|_{L^2(\Omega)} \le C h \big( \|u_0\|_{H^1(\Omega)} + \|u_t\|_{L^1(0,T;H^1(\Omega))} + \tau \|u_{tt}\|_{L^1(0,T;L^2)}\big),
\end{align}
see \cite[Theorem~1.5]{Thomee} with $r=1$. 
Here $\tau = t^{n} - t^{n-1}$ is the time-step size, and $u_h^n$ denotes the $n$-th iterate of the Euler method. 
Again, we observe a certain sub-optimality concerning the regularity requirments for the solution. 
Moreover, the proof given in \cite{Thomee}, see also \cite{Wheeler73,Varga}, is only valid under additional 
restrictive regularity assumptions on the parameters and the domain, which strongly limit the applicability of the results. 

As a replacement for \eqref{eq:ee2}, we will derive the energy-norm estimate
\begin{align} \label{eq:res2}
\|u - \tilde u_h\|_{W(0,T)} 
&=\|u-\tilde u_h\|_{L^2(0,T;H^1(\Omega))} + \| u' - \tilde u_h'\|_{L^2(0,T;H^{-1}(\Omega))} \\
&\le C \big( 
  h \| u\|^2_{L^2(0,T;H^2(\Omega))} + h \|u'\|^2_{L^2(0,T;L^2(\Omega))}
\notag \\ & \qquad \qquad \qquad 
+ \tau \|u'\|_{L^2(0,T;H^1(\Omega))} + \tau \|u''\|_{L^2(0,T;H^{-1}(\Omega))}   
\big).\notag
\end{align}
Here $\tilde u_h$ denotes the function obtained from $u_h^n$ by piecewise linear interpolation in time. 
The bound for $u_h^n$ again follows easily by embedding of $W(0,T)$ into $L^\infty(0,T;L^2(\Omega))$.
As before, the estimate \eqref{eq:res2} can be seen to be optimal with respect to the approximation properties of the discretization, and, in view of the a-priori bounds \eqref{eq:reg1} and \eqref{eq:reg2}, the regularity requirements for the solution seem natural. 
Moreover, no artificial regularity of the domain and the coefficients will be required for the proof of this estimate.


\medskip

One key step in the derivation of our results will be the careful estimation of time derivatives in the $H^{-1}$-norm. 
This seems natural in view of the definition of the energy-norm and its role in the a-priori estimates. 
The importance of such estimates has already been observed in the context of a-posteriori error estimation \cite{MakridakisNochetto03}.
Let us also mention \cite{DouglasDupont77,Wheeler75}, where $H^{-1}$-Galerkin methods for the solution 
of parabolic initial boundary value problems have been investigated.

A second difference to previous investigations is, that we use here a somewhat different error splitting as usually employed, namely 
\begin{align}
u - u_h = (u - \pi_h u) + (\pi_h u - u_h), 
\end{align}
where $\pi_h$ is the $L^2$-projection onto the finite element space.
The $L^2$-projection $\pi_h$ has important advantages in comparison to the Ritz-projector $R_h$:
First, the estimate corresponding to \eqref{eq:ritz} and further approximation properties can be proven without 
regularity assumptions on the domain or the parameters. 
In addition, $\pi_h$ commutes with the time derivative, even for problems with time dependent parameters. 
At several places in our analysis, we will rely on the $H^1$-stability of the $L^2$-projection \cite{BankYserentant14,BramblePasciakSteinbach02}. 
Morover, we use approximation error estimates for the $L^2$-projection in various norms, in particular including estimates in the $H^{-1}$-norm.

\medskip

The remainder of the manuscript is organized as follows:
In Section~\ref{sec:prelim}, we formally present the problem to be investigated 
and we discuss the basic assumptions on the domain and the coefficients. 
%
%
In Section~\ref{sec:fem}, we introduce the finite element spaces and summarize
some estimates for the $L^2$-projection that are required later in our analysis.  
Section~\ref{sec:semi} is then concerned with the derivation of the estimate \eqref{eq:res1} for the semi-discretization. 
In Section~\ref{sec:time}, we investigate the time discretization by the backward Euler method and we derive the second estimate \eqref{eq:res2}. 
For illustration of the wider applicability of our results, 
we discuss in Section~\ref{sec:num} two irregular test proplems, 
for which, due to lack of regularity, the standard bounds \eqref{eq:ee1} and \eqref{eq:ee2} cannot be 
applied directly.
Due to the weaker requirements for our estimates \eqref{eq:res1} and \eqref{eq:res2}, 
the optimal convergence in the energy-norm can however still be guaranteed also theoretically.
We close with a short discussion of our results and highlight some possibilities for generalization and future investigations.

\section{Preliminaries} \label{sec:prelim}


Let $\Omega \subset \RR^d$, $d=2,3$, be some bounded domain and $T>0$.
We use standard notation for function spaces, see e.g. \cite{Evans};
in particular,
$H_0^1(\Omega)$ denotes the sub-space of functions in $H^1(\Omega)$ with vanishing traces on $\partial \Omega$,
and $H^{-1}(\Omega) = (H_0^1(\Omega))'$ is the space of bounded linear functional on $H^1_0(\Omega)$; 
the corresponding duality product is denoted by $\langle \cdot, \cdot\rangle_{H^{-1}(\Omega) \times H_0^1(\Omega)}$.
%
%
%
Of particular importance for our analysis is the energy-space
\begin{align}
W(0,T) = \{u \in L^2(0,T;H_0^1(\Omega)) : u' \in L^2(0,T;H^{-1}(\Omega))\},
\end{align}
which is equipped with the norm $\|u\|_{W(0,T)}=\|u\|_{L^2(0,T;H^1(\Omega))} + \|u'\|_{L^2(0,T;H^{-1}(\Omega))}$.
For later reference, let us recall the following embedding result \cite[Sec.~5.9]{Evans}.
\begin{lemma} \label{lem:embedding}
Let $u \in W(0,T)$. Then $u \in C([0,T];L^2(\Omega))$ and 
$$
\sup_{0 \le t \le T}\|u(t)\|_{L^2(\Omega)} \le C \|u\|_{W(0,T)}.
$$
\end{lemma}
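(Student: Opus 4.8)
The plan is to exploit the Gelfand triple $H_0^1(\Omega) \hookrightarrow L^2(\Omega) \hookrightarrow H^{-1}(\Omega)$, in which $L^2(\Omega)$ is identified with its own dual so that $\langle v,w\rangle_{H^{-1}(\Omega)\times H_0^1(\Omega)} = (v,w)_{L^2(\Omega)}$ whenever $v \in L^2(\Omega)$. The heart of the matter is the chain rule
\begin{align*}
\frac{d}{dt}\|u(t)\|_{L^2(\Omega)}^2 = 2\,\langle u'(t),u(t)\rangle_{H^{-1}(\Omega)\times H_0^1(\Omega)},
\end{align*}
valid for a.e. $t\in(0,T)$. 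I would first verify this for smooth functions $u \in C^1([0,T];H_0^1(\Omega))$, where it is immediate from the ordinary product rule since then $u'(t) \in L^2(\Omega)$ and the duality pairing reduces to the $L^2$-inner product. To pass to general $u \in W(0,T)$ I would invoke the density of such smooth functions in $W(0,T)$; the uniform bound derived below shows that the embedding $W(0,T) \hookrightarrow C([0,T];L^2(\Omega))$ is bounded on this dense subset, and since $C([0,T];L^2(\Omega))$ is complete, both the identity and a continuous representative persist in the limit.

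For the bound itself I would integrate the chain rule from $s$ to $t$, giving
\begin{align*}
\|u(t)\|_{L^2(\Omega)}^2 = \|u(s)\|_{L^2(\Omega)}^2 + 2\int_s^t \langle u'(\tau),u(\tau)\rangle_{H^{-1}(\Omega)\times H_0^1(\Omega)}\,d\tau,
\end{align*}
and then average over the reference point $s\in(0,T)$ to avoid dependence on a single, possibly exceptional, time. Estimating $|\langle u',u\rangle| \le \|u'\|_{H^{-1}(\Omega)}\|u\|_{H_0^1(\Omega)}$ in the integral term, using the continuous embedding $H_0^1(\Omega)\hookrightarrow L^2(\Omega)$ in the averaged term, and applying Cauchy--Schwarz in time, I obtain
\begin{align*}
\|u(t)\|_{L^2(\Omega)}^2 \le \frac{C}{T}\,\|u\|_{L^2(0,T;H_0^1(\Omega))}^2 + 2\,\|u'\|_{L^2(0,T;H^{-1}(\Omega))}\,\|u\|_{L^2(0,T;H_0^1(\Omega))},
\end{align*}
uniformly in $t$. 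The right-hand side is controlled by $C\|u\|_{W(0,T)}^2$, and taking the supremum over $t$ yields the asserted estimate.

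The main obstacle is not the elementary inequalities but the rigorous justification of the chain rule in the weak setting together with the accompanying density statement: one must check that smooth $H_0^1(\Omega)$-valued functions are genuinely dense in $W(0,T)$ and that both the distributional time derivative (valued in $H^{-1}(\Omega)$) and the pointwise-in-time $L^2(\Omega)$ traces behave well under approximation, which is precisely where the Gelfand triple structure is indispensable. Once the identity is secured on the dense subset, the uniform bound promotes any approximating sequence to a Cauchy sequence in $C([0,T];L^2(\Omega))$, which simultaneously furnishes the continuous representative $u \in C([0,T];L^2(\Omega))$ and the claimed inequality. As this is the classical embedding theorem of \cite[Sec.~5.9]{Evans}, I would at this point refer to that source for the technical details of the density argument.
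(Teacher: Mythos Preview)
Your proof sketch is correct and follows the standard argument. Note, however, that the paper does not actually prove this lemma: it merely states the result and cites \cite[Sec.~5.9]{Evans}, which is exactly the reference you end up invoking for the density argument and the chain rule. So there is nothing to compare against beyond the citation itself; your proposal simply spells out what Evans does.
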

As a consequence, all estimates derived in the energy-norm $\|\cdot\|_{W(0,T)}$ 
automatically yield corresponding bounds in $\|\cdot\|_{L^\infty(0,T;L^2(\Omega))}$, 
i.e., pointwise in time. Here and below, we denote by $C$ some generic constant which may have different values in different occasions. 

\medskip

Let us now turn to the initial-boundary value problem \eqref{eq:1a}-\eqref{eq:1c}. 
We assume that the operator $A$ has the form $A u = -\div (a \nabla u) + b \cdot \nabla u + c u$.
In order to proof well-posedness of the initial boundary value problem, we assume 
that the parameters satisfy 
\begin{itemize}
 \item[(A1)] $a,c \in L^\infty(\Omega \times (0,T))$ and $b \in L^\infty(\Omega \times (0,T))^d$, and
 \item[(A2)] $a(x,t) \ge \underline a$ for some constant $\underline a > 0$ and a.e. $(x,t) \in \Omega \times (0,T)$.
\end{itemize}
Because of the first assumption, $A$ defines a bounded linear operator from $L^2(0,T;H_0^1(\Omega))$ to $L^2(0,T;H^{-1}(\Omega))$.
For a.e. $t \in (0,T)$ we then define an associated bilinear form by
\begin{align} 
a(u,v;t)  = \int_\Omega a(x,t) \nabla u(x) \nabla v(x) + b(x,t) \cdot \nabla u(x) v(x) + c(x,t) u(x) v(x) \ dx.
\end{align}
The weak formulation of the initial boundary value problem \eqref{eq:1a}-\eqref{eq:1c} now reads
\begin{problem}[Weak formulation] $ $ \label{prob:weak}\\
Given $f \in L^2(0,T;H^{-1}(\Omega))$ and $u_0 \in L^2(\Omega)$,
find $u \in W(0,T)$ such that $u(0) = u_0$ and 
\begin{align*} 
\langle u'(t), v \rangle_{H^{-1}(\Omega) \times H_0^1(\Omega)} + a(u(t),v;t) &= \langle f(t) , v\rangle_{H^{-1}(\Omega) \times H_0^1(\Omega)}
\end{align*}
holds for all test functions $v \in H_0^1(\Omega)$ and for a.e. $t \in (0,T)$.
\end{problem}
\noindent 
Under assumptions (A1)-(A2),  Problem~\ref{prob:weak} has a unique solution $u \in W(0,T)$ 
and there holds $\|u\|_{W(0,T)} \le C \big( \|u_0\|_{L^2(\Omega)} + \|f\|_{L^2(0,T;H^{-1}(\Omega)} \big)$ 
with a constant $C$ that depends only on the bounds for the coefficients and on the domain; see \cite{Evans} for details. 
The proof relies on a Gronwall argument and the following properties of the bilinear form. 
\begin{lemma}[Continuity and G{\aa}rding inequality] $ $ \label{lem:garding}\\
Let (A1) and (A2) hold. 
Then there exist constants $C_a,\alpha,\eta > 0$ such that 
\begin{align} \label{eq:garding}
a(u,v;t) \le C_a \|u\|_{H^1(\Omega)} \|v\|_{H^1(\Omega)} 
\quad \text{and} \quad 
a(u,u;t) + \eta \|u\|^2_{L^2(\Omega)} \ge  \alpha \|u\|_{H^1(\Omega)}^2,
\end{align} 
and these estimates hold uniformly for a.e. $t \in (0,T)$ and all $u,v \in H^1(\Omega)$. 
\end{lemma}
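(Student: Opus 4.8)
The plan is to treat the two inequalities separately, both by elementary termwise estimates on the three contributions to $a(\cdot,\cdot;t)$, expressing every constant through the $L^\infty$-bounds of the coefficients so that uniformity in $t$ comes for free (the bounds in (A1) are taken over $\Omega \times (0,T)$). For the continuity bound I would simply apply the Cauchy--Schwarz inequality to each term: the principal part gives $\int_\Omega a\,\nabla u\cdot\nabla v\,dx \le \|a\|_{L^\infty}\|\nabla u\|_{L^2(\Omega)}\|\nabla v\|_{L^2(\Omega)}$, the convection term gives $\int_\Omega (b\cdot\nabla u)\,v\,dx \le \|b\|_{L^\infty}\|\nabla u\|_{L^2(\Omega)}\|v\|_{L^2(\Omega)}$, and the reaction term gives $\int_\Omega c\,u\,v\,dx \le \|c\|_{L^\infty}\|u\|_{L^2(\Omega)}\|v\|_{L^2(\Omega)}$. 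Bounding each factor by the full $H^1$-norm and summing yields the first inequality with $C_a = \|a\|_{L^\infty}+\|b\|_{L^\infty}+\|c\|_{L^\infty}$. Note that no integration by parts is used, which is essential since $b$ is only assumed bounded and not differentiable.

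For the G{\aa}rding inequality I would set $v=u$ and start from
\begin{align*}
a(u,u;t) = \int_\Omega a\,|\nabla u|^2\,dx + \int_\Omega (b\cdot\nabla u)\,u\,dx + \int_\Omega c\,u^2\,dx.
\end{align*}
Ellipticity (A2) bounds the first term below by $\underline a\,\|\nabla u\|_{L^2(\Omega)}^2$, and the reaction term is bounded below trivially by $-\|c\|_{L^\infty}\|u\|_{L^2(\Omega)}^2$. The only term needing care is the sign-indefinite convection term; here I would invoke Young's inequality in the form $\|b\|_{L^\infty}\|\nabla u\|_{L^2(\Omega)}\|u\|_{L^2(\Omega)} \le \frac{\underline a}{2}\|\nabla u\|_{L^2(\Omega)}^2 + \frac{\|b\|_{L^\infty}^2}{2\underline a}\|u\|_{L^2(\Omega)}^2$, so that half of the coercive gradient term absorbs the convection contribution at the price of an additional $L^2$-term.

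Combining the three estimates leaves $a(u,u;t) \ge \frac{\underline a}{2}\|\nabla u\|_{L^2(\Omega)}^2 - \big(\frac{\|b\|_{L^\infty}^2}{2\underline a}+\|c\|_{L^\infty}\big)\|u\|_{L^2(\Omega)}^2$. Finally I would choose the shift $\eta = \frac{\underline a}{2}+\frac{\|b\|_{L^\infty}^2}{2\underline a}+\|c\|_{L^\infty}$, so that adding $\eta\|u\|_{L^2(\Omega)}^2$ raises the coefficient of $\|u\|_{L^2(\Omega)}^2$ to exactly $\frac{\underline a}{2}$, giving $a(u,u;t)+\eta\|u\|_{L^2(\Omega)}^2 \ge \frac{\underline a}{2}\big(\|\nabla u\|_{L^2(\Omega)}^2+\|u\|_{L^2(\Omega)}^2\big) = \alpha\,\|u\|_{H^1(\Omega)}^2$ with $\alpha=\underline a/2$. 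There is no genuine obstacle in this argument; the one conceptual point is that the indefinite lower-order terms $b$ and $c$ obstruct plain coercivity on $H^1(\Omega)$, which is precisely why the statement is formulated with the $L^2$-shift $\eta$, and why the subsequent well-posedness proof proceeds through a Gronwall argument rather than Lax--Milgram alone.
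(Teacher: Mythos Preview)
Your proof is correct and follows essentially the same route as the paper: termwise Cauchy--Schwarz for continuity, then ellipticity plus Young's inequality on the convection term for the G{\aa}rding estimate, arriving at the identical constants $\alpha=\underline a/2$ and $\eta=\underline a/2+\|b\|_{L^\infty}^2/(2\underline a)+\|c\|_{L^\infty}$. Your write-up is in fact more detailed and explicit than the paper's brief sketch.
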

\begin{proof}
The continuity follows from the Cauchy-Schwarz inequality. 
Using the lower and upper bounds for the coefficients, we get 
\begin{align*} 
a(u,u;t) \ge \underline a \|\nabla u\|^2_{L^2(\Omega)} + \|b(t)\|_{L^\infty} \|\nabla u\|_{L^2(\Omega)} \|u\|_{L^2(\Omega)} + \|c(t)\|_{L^\infty} \|u\|_{L^2(\Omega)}^2.  
\end{align*}
The G{\aa}rding inequality then follows by Young's inequality and choosing the coefficients, for instance, as $\alpha = \underline a/2$ and $\eta=\underline{a}/2 + \|b\|_{L^\infty}^2/(2 \underline a) + \|c\|_{L^\infty}$.
\end{proof}

\section{Properties of the $L^2$-projection onto finite element spaces} \label{sec:fem}

For the semi-discretization in space, we will employ a standard finite element method. 
To avoid technical difficulties, we assume that $\Omega$ is polyhedral and that it can be partitioned into a set $\Th=\{T\}$ of simplicial elements $T$. More precisely, we require that 
\begin{itemize}
 \item[(A3)] $\Th$ is a regular simplicial partition of $\Omega$, 
             i.e., the intersection of two different elements either empty, or a vertex, 
             an entire edge, (an entire face) of both elements;
 \item[(A4)] $\Th$ is locally quasi-uniform, i.e., there exists a $\gamma>0$ such that 
             the diameter $h_T$ of an element $T$ and the radius $\rho_T$ of the largest ball that can be inscribed in $T$ 
             are related by $\gamma h_T \le \rho_T \le h_T$ for all elements $T$.
\end{itemize}
Given such a mesh $\Th$, we consider the standard finite element space 
\begin{align}
V_h = \{v \in H_0^1(\Omega) : v|_{T} \in P_1(T) \text{ for all } T \in \Th\},
\end{align}
of piecewise linear continuous functions that vanish on the boundary. 
Furthermore, we denote by $\pi_h : L^2(\Omega) \to V_h$ the $L^2$-orthogonal projection defined by 
\begin{align*}
(\pi_h u, v_h)_{L^2(\Omega)} = (u, v_h)_{L^2(\Omega)} \qquad \text{for all } v_h \in V_h.
\end{align*}
Obviously, $\|\pi_h u\|_{L^2(\Omega)} \le \|u\|_{L^2(\Omega)}$, i.e., the $L^2$-projection is 
stable (a bounded linear operator) on $L^2(\Omega)$. 
Under the assumption (A3)-(A4), it is however also stable on $H^1_0(\Omega)$.
\begin{lemma}[$H^1$-stability] \label{lem:h1stability}
Let (A3)-(A4) hold. Then 
\begin{align*}
\|\pi_h u\|_{H^1(\Omega)} \le C \|u\|_{H^1(\Omega)} \qquad \text{for all } u \in H_0^1(\Omega),
\end{align*}
and the constant $C$ only depends on the domain and the regularity constants of the mesh.
\end{lemma}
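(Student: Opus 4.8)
The plan is to estimate $\|\nabla \pi_h u\|_{L^2(\Omega)}$ by comparing $\pi_h u$ with a stable quasi-interpolant. Let $I_h : H_0^1(\Omega) \to V_h$ denote a Scott--Zhang type quasi-interpolant that preserves the homogeneous boundary condition, is $H^1$-stable, $\|I_h u\|_{H^1(\Omega)} \le C\|u\|_{H^1(\Omega)}$, and satisfies the local first-order bound $\|u - I_h u\|_{L^2(T)} \le C h_T \|u\|_{H^1(\omega_T)}$, where $\omega_T$ is the patch of elements touching $T$; both properties rest on (A3)--(A4) alone. Writing $\pi_h u = I_h u + (\pi_h u - I_h u)$, the triangle inequality reduces the task to bounding $\nabla e_h$ with $e_h := \pi_h u - I_h u \in V_h$, since $H^1$-stability of $I_h$ already yields $\|\nabla I_h u\|_{L^2(\Omega)} \le C\|u\|_{H^1(\Omega)}$. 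The crucial structural observation is that, because $I_h u \in V_h$ so that $\pi_h (I_h u) = I_h u$, one has $e_h = \pi_h(u - I_h u)$.

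Next I would apply a local inverse inequality, valid on each shape-regular element by (A4), namely $\|\nabla e_h\|_{L^2(T)} \le C h_T^{-1}\|e_h\|_{L^2(T)}$. Introducing the piecewise-constant mesh-size function $h(x) = h_T$ for $x \in T$ and summing over $\Th$ gives
\[
\|\nabla e_h\|_{L^2(\Omega)}^2 \le C \sum_{T \in \Th} h_T^{-2}\|e_h\|_{L^2(T)}^2 = C\,\|h^{-1} e_h\|_{L^2(\Omega)}^2 .
\]
Thus everything hinges on the mesh-weighted quantity $\|h^{-1} e_h\|_{L^2(\Omega)} = \|h^{-1}\pi_h(u - I_h u)\|_{L^2(\Omega)}$. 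The companion weighted approximation estimate is elementary: by the local bound for $I_h$ and the finite overlap of the patches $\omega_T$ (again a consequence of shape regularity),
\[
\|h^{-1}(u - I_h u)\|_{L^2(\Omega)}^2 = \sum_{T \in \Th} h_T^{-2}\|u - I_h u\|_{L^2(T)}^2 \le C \sum_{T \in \Th} \|u\|_{H^1(\omega_T)}^2 \le C\,\|u\|_{H^1(\Omega)}^2 .
\]

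The genuine obstacle, and the reason the statement is nontrivial for merely shape-regular meshes, is passing from $\|h^{-1}(u - I_h u)\|$ to $\|h^{-1}\pi_h(u - I_h u)\|$: because $\pi_h$ is global, its restriction to a small element couples $u$ over the whole domain, so the weight $h(x)$ cannot simply be pulled through a local best-approximation estimate. The remedy is the mesh-weighted $L^2$-stability
\[
\|h^{-1}\pi_h w\|_{L^2(\Omega)} \le C\,\|h^{-1} w\|_{L^2(\Omega)} \qquad \text{for all } w \in L^2(\Omega),
\]
valid under (A3)--(A4); applying it with $w = u - I_h u$ and combining with the two displays above closes the argument. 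For a globally quasi-uniform mesh this weighted stability is trivial, since $h$ is then a single constant and one only needs the plain $L^2$-optimality $\|\pi_h u - u\|_{L^2(\Omega)} \le \|I_h u - u\|_{L^2(\Omega)}$ together with the global inverse inequality. For an arbitrarily graded mesh, however, the weighted stability is the substantial ingredient: it follows from local mass-matrix bounds together with a duality argument that quantifies, via the finite overlap and shape regularity, how weakly the projection couples elements of disparate size. I expect this weighted stability to be the main difficulty, and in the write-up I would invoke it from \cite{BramblePasciakSteinbach02,BankYserentant14} (or reproduce its short duality proof) rather than re-derive it from scratch.
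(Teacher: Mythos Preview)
The paper does not actually prove this lemma: after stating it, the text only remarks that for globally quasi-uniform meshes the bound follows from the Bramble--Hilbert lemma plus an inverse inequality, and for locally quasi-uniform meshes it defers entirely to \cite{BramblePasciakSteinbach02} and \cite{BankYserentant14}. Your proposal therefore cannot be compared to a proof in the paper --- there is none --- but it can be compared to what the cited references do.

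Your sketch is correct and is, in outline, precisely the argument of \cite{BramblePasciakSteinbach02}: split off a stable quasi-interpolant, reduce via the local inverse inequality to a mesh-weighted $L^2$ bound $\|h^{-1}\pi_h w\|_{L^2}\le C\|h^{-1}w\|_{L^2}$, and then use the local approximation power of the quasi-interpolant. You correctly isolate the weighted $L^2$-stability as the only nontrivial step (trivial for quasi-uniform meshes, genuinely global for graded ones), and your plan to invoke \cite{BramblePasciakSteinbach02,BankYserentant14} for it is exactly what the paper does for the whole lemma. So your write-up is not a different route; it is an expanded version of the argument the paper outsources, with the same references carrying the essential burden. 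The only caution is terminological: the weighted stability in \cite{BramblePasciakSteinbach02} is established under a mesh-grading hypothesis that, for conforming simplicial meshes, is implied by shape regularity (A4) because adjacent elements share a full face and hence have comparable diameters --- you should make that implication explicit rather than assert the weighted bound ``under (A3)--(A4)'' without comment.
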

\noindent 
For globally quasi-uniform meshes, the result follows directly from the Bramble-Hilbert Lemma and 
an inverse inequality. The proof for locally quasi-uniform meshes has been given in \cite{BramblePasciakSteinbach02}; 
see \cite{BankYserentant14} for generalizations including higher order approximations.

\medskip 

\noindent 
We will also require the following approximation error estimates. 
\begin{lemma} \label{lem:approximation}
Let (A3)-(A4) hold. Then 
\begin{align*}
\|u - \pi_h u\|_{H^s(\Omega)} \le C h^{k-s} \|u\|_{H^{k}(\Omega)} 
\end{align*}
for all $u \in H_0^{k}$ with $0 \le k \le 2$ and $-1 \le s \le \min\{1,k\}$. 
\end{lemma}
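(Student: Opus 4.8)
The plan is to establish the approximation estimates in Lemma~\ref{lem:approximation} by interpolating between three cornerstone cases: the $L^2$-stability and $H^1$-stability of $\pi_h$, the standard local approximation estimate in the $H^s$-norm for $0 \le s \le 1$, and the negative-norm estimate for $s=-1$ obtained by duality. I would treat the endpoint cases first and then fill in the intermediate values of $k$ and $s$ by interpolation.

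First I would record the approximation estimate for nonnegative $s$. For $k=0$ the claim $\|u - \pi_h u\|_{L^2} \le C\|u\|_{L^2}$ follows immediately from $L^2$-stability, and for $s=1$ from $H^1$-stability (Lemma~\ref{lem:h1stability}); the genuinely quantitative cases are $(k,s) \in \{(1,0),(2,0),(2,1)\}$. These are the classical estimates: using the Bramble--Hilbert lemma and the best-approximation property of $\pi_h$ in $L^2$, together with the existence of a local interpolation/quasi-interpolation operator reproducing $P_1$, one obtains
\begin{align*}
\|u - \pi_h u\|_{L^2(\Omega)} \le \|u - I_h u\|_{L^2(\Omega)} \le C h^k \|u\|_{H^k(\Omega)}, \qquad k=1,2,
\end{align*}
where $I_h$ is a suitable interpolant. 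For the $H^1$-seminorm estimate at $k=2$, I would bound $\|u - \pi_h u\|_{H^1}$ by $\|u - I_h u\|_{H^1} + \|\pi_h(I_h u - u)\|_{H^1}$, control the first term by $Ch\|u\|_{H^2}$ via Bramble--Hilbert, and control the second using $H^1$-stability of $\pi_h$ followed by an inverse inequality (legitimate on quasi-uniform meshes, and extended to locally quasi-uniform meshes by the cited results \cite{BramblePasciakSteinbach02,BankYserentant14}). The local quasi-uniformity in (A4) is precisely what makes the inverse estimate and the $H^1$-stability available.

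Next I would handle the negative-norm case $s=-1$ by a duality argument. For $g \in H_0^1(\Omega)$ the orthogonality $(u-\pi_h u, v_h)_{L^2}=0$ for all $v_h \in V_h$ gives
\begin{align*}
\langle u - \pi_h u, g \rangle
= (u - \pi_h u, g)_{L^2(\Omega)}
= (u - \pi_h u, g - \pi_h g)_{L^2(\Omega)}
\le \|u - \pi_h u\|_{L^2} \, \|g - \pi_h g\|_{L^2},
\end{align*}
and I would insert the already-established $L^2$-estimates $\|u-\pi_h u\|_{L^2} \le Ch^k\|u\|_{H^k}$ and $\|g - \pi_h g\|_{L^2} \le Ch\|g\|_{H^1}$. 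Dividing by $\|g\|_{H^1}$ and taking the supremum yields $\|u - \pi_h u\|_{H^{-1}} \le C h^{k+1}\|u\|_{H^k}$ for $k=0,1,2$, which is exactly the claimed rate $h^{k-s}$ at $s=-1$. The symmetry of $\pi_h$ as an $L^2$-projection is what makes this duality step clean, and is one of the advantages over the Ritz projector emphasized in the introduction.

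Finally, I would obtain the remaining pairs $(k,s)$ with $0<s<1$ or fractional interpolation by real (or complex) interpolation of the endpoint estimates just derived, using that the fractional Sobolev spaces $H^s$ arise as interpolation spaces between $H^{-1}, L^2, H^1$ and that $\operatorname{id}-\pi_h$ is a bounded linear operator between the relevant endpoint spaces. I expect the main obstacle to be purely technical bookkeeping at the boundary: ensuring the interpolation operator $I_h$ and the negative-norm duality respect the homogeneous boundary condition encoded in $H_0^k$, so that $\pi_h g \in V_h \subset H_0^1$ is a legitimate test object and the orthogonality is genuinely available. This is handled by working throughout with $H_0^k$ and the space $V_h$ of functions vanishing on $\partial\Omega$, so that no boundary terms intrude.
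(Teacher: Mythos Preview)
Your proposal is correct and follows essentially the same path as the paper: establish the $s=0$ estimates by best approximation, the $s=-1$ estimates by the identical duality argument exploiting the self-adjointness of $\pi_h$ (the paper writes $(u-\pi_h u,v)=(u,v-\pi_h v)$, you write $(u-\pi_h u,v)=(u-\pi_h u,v-\pi_h v)$; both are the same orthogonality), and the $s=1$ estimate via the $H^1$-stability of $\pi_h$. The only cosmetic difference is that for the $H^1$-bound you compare with a quasi-interpolant $I_h$, whereas the paper compares with the $H^1$-orthogonal projection $\pi_h^1$, obtaining $\|u-\pi_h u\|_{H^1}\le(1+C)\|u-\pi_h^1 u\|_{H^1}$ directly from Lemma~\ref{lem:h1stability}; both devices are standard and yield the same conclusion.

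One small clean-up: once you invoke $H^1$-stability to bound $\|\pi_h(I_h u-u)\|_{H^1}\le C\|I_h u-u\|_{H^1}$, no inverse inequality is needed (or wanted) afterwards---on merely locally quasi-uniform meshes a global inverse inequality with a single $h$ is not available anyway, so it is cleaner to drop that step and rely solely on Lemma~\ref{lem:h1stability}, exactly as the paper does.
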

\begin{proof}
For completeness, we sketch the main steps.
The case $s=0$ and $0 \le k \le 2$ is well known and follows from the Bramble-Hilbert lemma and scaling arguments. 
To show the estimate for $s=1$ and $1 \le k \le 2$, 
let us denote by $\pi^1_h : H_0^1(\Omega) \to V_h$ the $H^1$-orthogonal projection defined by 
\begin{align*}
(\pi_1^h u, v_h )_{H^1(\Omega)} = (u,v_h)_{H^1(\Omega)} \qquad \text{for all } v_h \in V_h.
\end{align*}
Recall that $\|\pi^1_h u - u\|_{H^1(\Omega)} \le C' h^{k-1} \|u\|_{H^k(\Omega)}$ for $1 \le k \le 2$, 
which is the usual finite element error estimate \cite{BrennerScott}. 
We can then proceed by
\begin{align*}
\|u - \pi_h u \|_{H^1(\Omega)} 
&\le \|u - \pi_h \pi_h^1 u\|_{H^1(\Omega)} + \|\pi_h (u - \pi_h^1 u)\|_{H^1(\Omega)} \\
&\le (1 + C) \|u - \pi^1_h u\|_{H^1(\Omega)} \le (1+C) C' h \|u\|_{H^2(\Omega)},
\end{align*}
where we used the projection property and the $H^1$-stability of $\pi_h$, and the approximation properties of $\pi^1_h$ in the last two steps.
Now assume that $u \in H^k(\Omega)$ with $0 \le k \le 2$. Then
\begin{align*}
\|u - \pi_h u\|_{H^{-1}(\Omega)} 
&= \sup_{v \in H_0^1(\Omega)} (u - \pi_h u, v)_{L^2(\Omega)} /\|v\|_{H^1(\Omega)} \\
&= \sup_{v \in H_0^1(\Omega)} (u, v - \pi_h v)_{L^2(\Omega)} /\|v\|_{H^1(\Omega)}
\le C h^{k+1} \|u\|_{H^k(\Omega)}.
\end{align*}
Here we used the approximation property of $\pi_h$ for $s=0$. 
This yields the estimate for $s=-1$ and $0 \le k \le 2$ and completes the proof.
\end{proof}
Corresponding estimates for real valued $s$ and $k$ can be obtained by interpolation arguments. 
Also note that the dependence on $h$ in the above estimates can be localized.

\section{Galerkin semi-discretization} \label{sec:semi}

As a first step in the approximation process, let us investigate the semi-discretization in space by finite elements.
Proceeding in a standard fashion, we define
\begin{problem}[Semi-discretization] $ $ \label{prob:semi} \\
Find $u_h \in H^1(0,T;V_h)$ such that $u_h(0)=\pi_h u_0$ and 
\begin{align} \label{eq:semi}
(u_h'(t), v_h)_{L^2(\Omega)} + a(u_h(t),v_h;t) &= \langle f(t), v_h \rangle_{H^{-1}(\Omega) \times H_0^1(\Omega)}  
\end{align}
for all test functions $v_h \in V_h$ and a.e. $t \in (0,T)$.
\end{problem}
The first term could also be written as $\langle u_h'(t), v_h \rangle_{H^{-1}(\Omega) \times H_0^1(\Omega)}$.
By choosing a basis for $v_h$, the semi-discrete problem yields an ordinary differential equation, 
and existence and uniqueness follow by the Picard-Lindelöf theorem. 
More precicely, we have
\begin{lemma}[Well-posedness and discrete a-priori estimate] $ $ \label{lem:discrete_apriori}\\
Let (A1)--(A4) hold. Then Problem~\ref{prob:semi} has a unique solution $u_h \in H^1(0,T;V_h)$ and 
\begin{align*}
\|u_h\|_{W(0,T)} \le C (\|u_0\|_{L^2(\Omega)} + \|f\|_{L^2(0,T;H^{-1}(\Omega))}).
\end{align*}
The constant $C$ in this estimate depends only on the bounds for the coefficients, the domain, and the constants characterizing the regularity of the mesh. 
\end{lemma}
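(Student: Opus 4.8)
The plan is to obtain existence and uniqueness from the theory of linear ordinary differential equations, and then to derive the a-priori bound in two stages: a standard energy estimate controlling the $H^1$-part of the norm, followed by a duality argument controlling the $H^{-1}$-norm of the time derivative.

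First I would settle well-posedness. Fixing a basis $\{\varphi_i\}$ of $V_h$ and writing $u_h(t) = \sum_i c_i(t)\varphi_i$, equation \eqref{eq:semi} turns into a linear system $M c'(t) = -S(t)c(t) + g(t)$ with constant invertible mass matrix $M$, stiffness matrix $S(t)$ having entries $a(\varphi_j,\varphi_i;t)$, and right-hand side $g_i(t) = \langle f(t),\varphi_i\rangle$. By (A1) the entries of $S$ lie in $L^\infty(0,T)$ and $g \in L^2(0,T)$, so the Carath\'eodory conditions for linear systems are met and a unique absolutely continuous solution $c \in H^1(0,T)$, hence $u_h \in H^1(0,T;V_h)$, exists.

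For the $H^1$-part I would test \eqref{eq:semi} with $v_h = u_h(t)$, use $(u_h',u_h)_{L^2} = \tfrac12\frac{d}{dt}\|u_h\|_{L^2}^2$ and the G{\aa}rding inequality of Lemma~\ref{lem:garding}, absorb the term $\|f\|_{H^{-1}}\|u_h\|_{H^1}$ with Young's inequality, and apply Gronwall's lemma. Together with $\|u_h(0)\|_{L^2} = \|\pi_h u_0\|_{L^2} \le \|u_0\|_{L^2}$ this gives
\begin{align*}
\|u_h\|_{L^\infty(0,T;L^2(\Omega))}^2 + \|u_h\|_{L^2(0,T;H^1(\Omega))}^2 \le C\big(\|u_0\|_{L^2(\Omega)}^2 + \|f\|_{L^2(0,T;H^{-1}(\Omega))}^2\big).
\end{align*}

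The crux, and the step I expect to be the main obstacle, is bounding $\|u_h'\|_{L^2(0,T;H^{-1})}$: although $u_h'(t) \in V_h$, the dual norm tests against all of $H_0^1(\Omega)$, so the naive $L^2$-bound on $u_h'$ is not enough. The decisive observation is that, since $u_h'(t)\in V_h$ and $v-\pi_h v$ is $L^2$-orthogonal to $V_h$, one has $(u_h'(t),v)_{L^2} = (u_h'(t),\pi_h v)_{L^2}$ for every $v\in H_0^1(\Omega)$. Inserting $\pi_h v$ as a test function in \eqref{eq:semi} yields $(u_h'(t),\pi_h v)_{L^2} = \langle f(t),\pi_h v\rangle - a(u_h(t),\pi_h v;t)$, and combining the continuity bound from Lemma~\ref{lem:garding} with the $H^1$-stability of $\pi_h$ from Lemma~\ref{lem:h1stability} gives
\begin{align*}
|\langle u_h'(t),v\rangle| \le C\big(\|f(t)\|_{H^{-1}(\Omega)} + \|u_h(t)\|_{H^1(\Omega)}\big)\,\|v\|_{H^1(\Omega)}.
\end{align*}
Taking the supremum over $v$ and integrating in time shows $\|u_h'\|_{L^2(0,T;H^{-1})} \le C(\|f\|_{L^2(0,T;H^{-1})} + \|u_h\|_{L^2(0,T;H^1)})$, and the previous step bounds the last term, which closes the estimate. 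I would emphasize that this argument uses only the $H^1$-stability of the $L^2$-projection, which by Lemma~\ref{lem:h1stability} holds under the mesh conditions (A3)-(A4) alone, so no extra regularity of the domain or the coefficients enters the constant $C$.
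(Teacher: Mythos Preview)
Your proposal is correct and follows essentially the same route as the paper: the energy estimate via testing with $u_h(t)$, the G{\aa}rding inequality, and Gronwall for the $H^1$-part, and then the key observation $(u_h'(t),v)_{L^2} = (u_h'(t),\pi_h v)_{L^2}$ combined with the $H^1$-stability of $\pi_h$ for the $H^{-1}$-bound on the time derivative. Your treatment of existence and uniqueness via Carath\'eodory is slightly more detailed than the paper, which dispatches this in one line before the proof, but the substance is the same.
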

\begin{proof}
It remains to verify the a-priori estimate. 
By testing the discrete variational problem in the usual way with $v_h = u_h(t)$, one can show that 
\begin{align*}
\frac{1}{2} \frac{d}{dt}\|u_h(t)\|_{L^2(\Omega)}^2 + \frac{\alpha}{2} \|u_h(t)\|^2_{H^1(\Omega)}
&\le \eta \|u\|^2_{L^2(\Omega)} + \frac{1}{2\alpha}\|f(t)\|_{H^{-1}(\Omega)}^2.
\end{align*}
Here we only used the G{\aa}rding inequality \eqref{eq:garding} and some elementary manipulations.
Integrating over time and applying a Gronwall argument then yields the energy estimate
\begin{align*}
\|u\|_{L^\infty(0,T;L^2(\Omega))}^2 +\|u\|_{L^2(0,T;H^1(\Omega))}^2 
\le C \big(\|u_0\|_{L^2(\Omega)}^2 +  \|f\|_{L^2(0,T;H^{-1}(\Omega))}^2 \big).  
\end{align*}
To obtain the remaining bound for the time derivative, observe that the $H^{-1}$-norm 
of a discrete function can be expressed as
\begin{align*}
\|u_h'(t)\|_{H^{-1}(\Omega)} 
&= \sup_{v \in H_0^1(\Omega)} (u_h'(t), v)_{L^2(\Omega)} / \|v\|_{H^1(\Omega)}   \\
&= \sup_{v \in H_0^1(\Omega)} (u_h'(t), \pi_h v)_{L^2(\Omega)} / \|v\|_{H^1(\Omega)}. 
\end{align*}
Using the discrete variational problem with $v_h = \pi_h v$ further yields 
\begin{align*}
(u_h'(t), \pi_h v)_{L^2(\Omega)} 
&= \langle f_h(t), \pi_h v\rangle_{H^{-1}(\Omega) \times H_0^1(\Omega)} - a(u_h(t),\pi_h v;t) \\
&\le \big( \|f_h(t)\|_{H^{-1}(\Omega)} + C \|u_h(t)\|_{H^1(\Omega)} \big) \|\pi_h v \|_{H^1(\Omega)}.  
\end{align*}
Since $\pi_h$ is bounded on $H_0^1(\Omega)$, we further have $\|\pi_h v\|_{H^1(\Omega)} \le C \|v\|_{H^1(\Omega)}$.
Using these estimates in the expression for the norm of the time derivative finally yields 
\begin{align*}
\|u_h(t)\|_{H^{-1}(\Omega)} \le C' \big( \|f_h(t)\|_{H^{-1}(\Omega)} + \|u_h(t)\|_{H^1(\Omega)} \big).
\end{align*}
The result then follows by integration over time and the energy estimate derived before.
\end{proof}

As a direct consequence of the definition and the similarity of the continuous and the semi-discrete variational problems, 
we obtain 
\begin{lemma}[Galerkin orthogonality] $ $ \label{lem:galerkin_orthogonality}\\
Let $u$ and $u_h$  denote the solutions of Problems~\ref{prob:weak} and \ref{prob:semi}, respectively. 
Then 
\begin{align*}
\langle u(t) - u_h(t), v_h\rangle_{H^{-1}(\Omega) \times H_0^1(\Omega)} 
+ a(u(t)-u_h(t), v_h;t) = 0
\end{align*}
for almost every $t \in (0,T)$ and all $v_h \in V_h$. Moreover, $\pi_h (u(0)-u_h(0))=0$.
\end{lemma}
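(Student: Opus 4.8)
The plan is to obtain the identity by the standard device of testing both variational equations with the same discrete function and subtracting. First I would observe that, since $V_h \subset H_0^1(\Omega)$ by construction, every $v_h \in V_h$ is an admissible test function in the weak formulation (Problem~\ref{prob:weak}). Choosing $v = v_h$ there gives, for a.e. $t \in (0,T)$,
\[
\langle u'(t), v_h\rangle_{H^{-1}(\Omega) \times H_0^1(\Omega)} + a(u(t), v_h; t) = \langle f(t), v_h\rangle_{H^{-1}(\Omega) \times H_0^1(\Omega)},
\]
whereas the semi-discrete Problem~\ref{prob:semi}, posed with the \emph{same} right-hand side $f$, yields
\[
(u_h'(t), v_h)_{L^2(\Omega)} + a(u_h(t), v_h; t) = \langle f(t), v_h\rangle_{H^{-1}(\Omega) \times H_0^1(\Omega)}.
\]

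The one point that deserves a word is the matching of the two time-derivative terms, since they are written in different pairings. Because $u_h \in H^1(0,T;V_h)$, its time derivative satisfies $u_h'(t) \in V_h \subset L^2(\Omega)$ for a.e.\ $t$, and for an $L^2$-function the duality pairing against $v_h \in H_0^1(\Omega)$ reduces to the $L^2$-inner product, i.e. $(u_h'(t), v_h)_{L^2(\Omega)} = \langle u_h'(t), v_h\rangle_{H^{-1}(\Omega) \times H_0^1(\Omega)}$; this is exactly the identity already recorded after Problem~\ref{prob:semi}. With this identification both time-derivative terms reside in the same pairing, and I would then simply subtract the two identities: the right-hand sides cancel since $f$ is common to both, and the bilinearity of $a(\cdot, v_h; t)$ in its first argument gives
\[
\langle u'(t) - u_h'(t), v_h\rangle_{H^{-1}(\Omega) \times H_0^1(\Omega)} + a(u(t) - u_h(t), v_h; t) = 0
\]
for a.e. $t \in (0,T)$ and all $v_h \in V_h$, which is the asserted orthogonality.

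It remains to treat the initial data. Here I would use that $u(0) = u_0$ while $u_h(0) = \pi_h u_0$ by the prescribed discrete initial condition, so that $u(0) - u_h(0) = u_0 - \pi_h u_0$. Applying $\pi_h$ and using that it is a projection, i.e. $\pi_h \pi_h u_0 = \pi_h u_0$ since $\pi_h u_0 \in V_h$, gives $\pi_h(u(0) - u_h(0)) = \pi_h u_0 - \pi_h u_0 = 0$, as claimed.

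Since the whole argument amounts to testing both equations with the same discrete function and subtracting, there is no genuine obstacle; the only item requiring care is the consistency of the two time-derivative terms, which is precisely why the semi-discrete problem is posed with the $L^2$-inner product on the discrete side. This is also the feature that later permits the Galerkin orthogonality to be exploited directly in the $H^{-1}$-norm, in keeping with the energy-norm philosophy of the paper.
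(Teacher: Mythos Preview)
Your argument is correct and coincides with the paper's own treatment: the paper does not give a formal proof but merely states that the lemma is ``a direct consequence of the definition and the similarity of the continuous and the semi-discrete variational problems'', which is precisely the subtraction you carry out. You also correctly wrote the time-derivative term as $\langle u'(t)-u_h'(t),v_h\rangle$; the missing primes in the displayed statement are evidently a typo in the paper, since the identity obtained by subtracting Problems~\ref{prob:weak} and~\ref{prob:semi} necessarily involves the time derivatives, and this is how the lemma is used in the proof of Lemma~\ref{lem:semi_discrete_error}.
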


\noindent 
We can now turn to the error analysis of the semi-discretization. 
To this end, we divide the error into an approximation error and a discrete error as
\begin{align*}
u(t) - u_h(t) = (u(t) - \pi_h u(t)) + (\pi_h u(t) - u_h(t)) = (i) + (ii) . 
\end{align*}
Due to the approximation properties of $\pi_h$, the first term can be bounded readily by 
\begin{lemma}[Approximation error] \label{lem:semi_approximation_error}
Let (A3)-(A4) hold. Then 
\begin{align*}
\|u - \pi_h u\|_{W(0,T)} \le C h \big( \|u\|_{L^2(0,T;H^2(\Omega)} + \|u'\|_{L^2(0,T;L^2(\Omega))}\big),
\end{align*}
and $C$ only depends on the domain and the shape-regularity constants of the mesh.
\end{lemma}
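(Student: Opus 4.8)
The plan is to unwind the definition of the energy norm and estimate the two constituent terms separately using the approximation properties of $\pi_h$ collected in Lemma~\ref{lem:approximation}. Recall that
\begin{align*}
\|u - \pi_h u\|_{W(0,T)}
= \|u - \pi_h u\|_{L^2(0,T;H^1(\Omega))}
+ \|u' - (\pi_h u)'\|_{L^2(0,T;H^{-1}(\Omega))}.
\end{align*}
For the first term, I would apply Lemma~\ref{lem:approximation} with $s=1$ and $k=2$ pointwise in time, giving $\|u(t) - \pi_h u(t)\|_{H^1(\Omega)} \le C h \|u(t)\|_{H^2(\Omega)}$, and then integrate the square over $(0,T)$ to obtain the bound $C h \|u\|_{L^2(0,T;H^2(\Omega))}$.

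The crucial observation for the second term is that $\pi_h$, being a fixed linear projection onto $V_h$ independent of $t$, \emph{commutes with the time derivative}, so that $(\pi_h u)' = \pi_h (u')$ and hence $u' - (\pi_h u)' = u' - \pi_h u' = (I - \pi_h) u'$. This is precisely the advantage of the $L^2$-projection over the Ritz projector emphasized in the introduction: for the time-dependent Ritz projector $R_h(t)$ this commutation fails and extra terms involving $\dot R_h$ appear. I would then apply Lemma~\ref{lem:approximation} with $s=-1$ and $k=0$ to the function $u'(t)$, yielding $\|(I-\pi_h)u'(t)\|_{H^{-1}(\Omega)} \le C h \|u'(t)\|_{L^2(\Omega)}$, and again integrate the square over time to get $C h \|u'\|_{L^2(0,T;L^2(\Omega))}$.

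Adding the two contributions gives exactly the claimed estimate, with the constant $C$ inheriting its dependence only on the domain and the shape-regularity constants of the mesh, since those are the only dependencies in Lemma~\ref{lem:approximation}.

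I do not expect a genuine obstacle here; the lemma is essentially a corollary of the approximation results already established. The one point requiring care — and the reason the statement is worth isolating — is the justification that $(\pi_h u)' = \pi_h(u')$ in the weak sense, together with the measurability and integrability in $t$ needed to pass from the pointwise-in-time bounds to the Bochner-space norms. Both follow from the regularity assumption $u \in L^2(0,T;H^2) \cap H^1(0,T;L^2)$ implicit in the finiteness of the right-hand side, so the argument stays routine.
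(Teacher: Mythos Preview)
Your proposal is correct and is exactly the intended argument: the paper does not spell out a proof for this lemma, stating only that it follows ``due to the approximation properties of $\pi_h$'', and your two applications of Lemma~\ref{lem:approximation} (with $s=1$, $k=2$ for the spatial part and $s=-1$, $k=0$ for the time derivative, together with the commutation $(\pi_h u)'=\pi_h u'$) are precisely what is meant.
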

Using Galerkin orthogonality and the discrete stability of the method, 
the discrete error component can then be bounded as usual by the approximation error as well.
\begin{lemma}[Discrete error] \label{lem:semi_discrete_error}
Let (A1)--(A4) hold. Then
\begin{align*}
\|\pi_h u - u_h\|_{W(0,T)} \le C \|u - \pi_h u\|_{W(0,T)}.
\end{align*}
Moreover, the constant $C$ only depends on the domain, 
the bounds for the coefficients, and the shape-regularity of the mesh.
\end{lemma}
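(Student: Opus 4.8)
The plan is to estimate the discrete error component $e_h := \pi_h u - u_h \in H^1(0,T;V_h)$ by deriving a variational equation for it, testing with a suitable discrete function, and applying the discrete stability argument already used in Lemma~\ref{lem:discrete_apriori}. First I would combine the Galerkin orthogonality of Lemma~\ref{lem:galerkin_orthogonality} with the splitting $u - u_h = (u - \pi_h u) + e_h$ to obtain, for all $v_h \in V_h$ and a.e.\ $t$,
\begin{align*}
\langle e_h'(t), v_h\rangle_{H^{-1}(\Omega) \times H_0^1(\Omega)} + a(e_h(t),v_h;t)
= -\langle (u - \pi_h u)'(t), v_h\rangle_{H^{-1}(\Omega) \times H_0^1(\Omega)} - a(u(t) - \pi_h u(t), v_h;t).
\end{align*}
The crucial simplification is that $\pi_h$ commutes with the time derivative, so $(u - \pi_h u)' = u' - \pi_h u'$, and moreover $(u' - \pi_h u', v_h)_{L^2} = 0$ for $v_h \in V_h$ by the defining property of the $L^2$-projection. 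Hence the first term on the right vanishes, and $e_h$ solves a discrete parabolic problem driven only by the consistency term $-a(u - \pi_h u, v_h;t)$, with vanishing initial data since $\pi_h(u(0) - u_h(0)) = 0$.

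Next I would apply the discrete a-priori estimate of Lemma~\ref{lem:discrete_apriori} to $e_h$, viewing $-a(\eta(t),\cdot;t)$ with $\eta := u - \pi_h u$ as the right-hand-side functional $g(t) \in H^{-1}(\Omega)$. By the continuity bound in the G{\aa}rding inequality \eqref{eq:garding}, one has $\|g(t)\|_{H^{-1}(\Omega)} \le C_a \|\eta(t)\|_{H^1(\Omega)}$, so that $\|g\|_{L^2(0,T;H^{-1}(\Omega))} \le C_a \|\eta\|_{L^2(0,T;H^1(\Omega))}$. The discrete stability estimate then yields
\begin{align*}
\|e_h\|_{W(0,T)} \le C \|g\|_{L^2(0,T;H^{-1}(\Omega))} \le C \|u - \pi_h u\|_{L^2(0,T;H^1(\Omega))} \le C \|u - \pi_h u\|_{W(0,T)},
\end{align*}
which is the desired bound. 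Here I am invoking that the energy estimate in Lemma~\ref{lem:discrete_apriori} applies to any discrete solution with a general $H^{-1}$-valued right-hand side and zero initial value, which is exactly how that lemma was proved (Gronwall plus the recovery of the time-derivative norm through $\pi_h$).

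The step I expect to require the most care is the recovery of the $\|e_h'\|_{L^2(0,T;H^{-1}(\Omega))}$ part of the energy norm. The energy estimate obtained by testing with $v_h = e_h$ directly controls only $\|e_h\|_{L^\infty(0,T;L^2)}$ and $\|e_h\|_{L^2(0,T;H^1)}$; the time-derivative bound is then reconstructed exactly as in Lemma~\ref{lem:discrete_apriori}, by writing $\|e_h'(t)\|_{H^{-1}} = \sup_v (e_h'(t), \pi_h v)_{L^2}/\|v\|_{H^1}$, substituting the variational equation for $e_h$, and using the $H^1$-stability of $\pi_h$ (Lemma~\ref{lem:h1stability}) together with the continuity of $a(\cdot,\cdot;t)$. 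This gives $\|e_h'(t)\|_{H^{-1}(\Omega)} \le C(\|\eta(t)\|_{H^1(\Omega)} + \|e_h(t)\|_{H^1(\Omega)})$ pointwise, and integrating in time and inserting the already-established bound on $\|e_h\|_{L^2(0,T;H^1)}$ closes the argument. No regularity of the domain or coefficients beyond (A1)--(A4) enters, since we never use the Ritz projector nor any elliptic duality estimate that would require it.
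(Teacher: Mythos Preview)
Your proof is correct and follows essentially the same approach as the paper: derive the error equation for $e_h = \pi_h u - u_h$, observe that the $L^2$-projection property kills the term $(u' - \pi_h u', v_h)_{L^2}$, bound the remaining consistency term $a(\pi_h u - u, v_h;t)$ via the continuity of the bilinear form, and invoke the discrete a-priori estimate of Lemma~\ref{lem:discrete_apriori}. Your additional paragraph spelling out the recovery of $\|e_h'\|_{L^2(0,T;H^{-1})}$ is exactly what is hidden inside that application of Lemma~\ref{lem:discrete_apriori}, so nothing is missing.
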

\begin{proof}
The discrete error $e_h = \pi_h u(t) - u_h(t)$ satisfies 
\begin{align*}
(e_h(t), v_h)_{L^2(\Omega)} + a(e_h(t),v_h; t) 
&=(\pi_h u'(t) - u'(t), v_h)_{L^2(\Omega)} + a(\pi_h u(t) - u(t), v_h;t)\\ 
&=a(\pi_h u(t) - u(t), v_h;t) =: \langle \tilde f(t), v_h\rangle_{H^{-1}(\Omega) \times H^1_0(\Omega)}, 
\end{align*}
and also $e_h(0) = \pi_h u(t) - u_h(t) = 0$. 
Using the continuity of the bilinear form, we have 
\begin{align*}
\|\tilde f(t)\|_{H^{-1}(\Omega)} \le C_a \|u(t) - \pi_h u(t)\|_{H^1(\Omega)}.
\end{align*}
By application of Lemma~\ref{lem:discrete_apriori} to the equation for the discrete error, we thus obtain
\begin{align*}
\|e_h\|_{W(0,T)} \le C \|\tilde f\|_{L^2(0,T;H^{-1}(\Omega))} \le  C' \|u - \pi_h u\|_{W(0,T)},
\end{align*}
and this  already proves the assertion of the lemma.
\end{proof}

\noindent 
By combining the previous two lemmas, we readily obtain our first main result. 
\begin{theorem}[Energy-norm error estimate for the semi-discretization] $ $ \label{lem:thm1}\\
Let (A1)-(A4) hold, and let $u$ and $u_h$ denote the solutions of Problems~\ref{prob:weak} and \ref{prob:semi}, respectively. 
Then 
\begin{align*}
\|u - u_h\|_{W(0,T)} \le C h \big( \|u\|_{L^2(0,T;H^2(\Omega)} + \|u'\|_{L^2(0,T;L^2(\Omega))}\big),
\end{align*}
and the constant $C$ only depends on the domain, the bounds for the parameters, and the shape regularity of the mesh.
\end{theorem}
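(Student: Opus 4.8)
The plan is to exploit the error splitting $u - u_h = (u - \pi_h u) + (\pi_h u - u_h)$ introduced just above the statement and to bound the two components separately, feeding in the two preceding lemmas. Since $\|\cdot\|_{W(0,T)}$ is a genuine norm on the energy-space, I would first invoke the triangle inequality to obtain
$$
\|u - u_h\|_{W(0,T)} \le \|u - \pi_h u\|_{W(0,T)} + \|\pi_h u - u_h\|_{W(0,T)}.
$$

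For the second, discrete, term I would apply Lemma~\ref{lem:semi_discrete_error}, which estimates the discrete error component by the approximation error and gives $\|\pi_h u - u_h\|_{W(0,T)} \le C \|u - \pi_h u\|_{W(0,T)}$. Substituting this back, both contributions are controlled by the single quantity $\|u - \pi_h u\|_{W(0,T)}$, so that
$$
\|u - u_h\|_{W(0,T)} \le (1 + C)\, \|u - \pi_h u\|_{W(0,T)}.
$$
It then only remains to insert the approximation estimate of Lemma~\ref{lem:semi_approximation_error}, namely $\|u - \pi_h u\|_{W(0,T)} \le C h \big(\|u\|_{L^2(0,T;H^2(\Omega))} + \|u'\|_{L^2(0,T;L^2(\Omega))}\big)$, which yields precisely the claimed bound. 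Tracking the constant is immediate: the discrete-stability constant from Lemma~\ref{lem:semi_discrete_error} depends only on the coefficient bounds, the domain, and the mesh regularity, while the approximation constant of Lemma~\ref{lem:semi_approximation_error} depends only on the domain and the shape-regularity, so their product inherits exactly the stated dependencies.

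Honestly, no real obstacle remains at this stage: the substantive analysis has already been carried out in the two preceding lemmas, and the theorem is a direct combination of them. The only point deserving a moment's care is that $\pi_h$ commutes with the time derivative, so that $(\pi_h u)' = \pi_h u'$ and the approximation error $u - \pi_h u$ is itself an element of $W(0,T)$ whose energy-norm is well defined. This is exactly the property of the $L^2$-projection that was used in Lemma~\ref{lem:semi_approximation_error} and is what makes the splitting via $\pi_h$, rather than the Ritz projector, so convenient here: no regularity of the domain or the parameters beyond (A1)--(A4) enters the argument.
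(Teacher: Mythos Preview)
Your proposal is correct and matches the paper's approach exactly: the paper states that the theorem follows ``by combining the previous two lemmas'' (Lemma~\ref{lem:semi_approximation_error} and Lemma~\ref{lem:semi_discrete_error}), which is precisely the triangle-inequality splitting and substitution you describe. Your additional remarks on the constant tracking and the commutation $(\pi_h u)' = \pi_h u'$ are accurate and consistent with the paper's reasoning.
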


Let us emphasize that the regularity requirements for the solution are natural. 
In view of approximation properties of finite elements, they are also almost necessary to 
guarantee the approximation order one in the energy-norm. 
Also note that no additional regularity of the domain or the coefficients was required for our proofs.

\section{Time stepping} \label{sec:time}

Let us now turn to the time discretization of the semi-discrete Problem~\ref{prob:semi} by the backward Euler method. 
For ease of presentation, we only consider here uniform time steps of size $\tau=T/N$, 
and therefore set $t^{n} = n \tau$. 
To allow for evaluation of the coefficients and the data at individual points in time, we further assume that 
\begin{itemize}
 \item[(A5)] $a,b,c \in W^{1,\infty}(0,T;L^\infty(\Omega))$, and
 \item[(A6)] $f \in H^1(0,T;H^{-1}(\Omega))$.
\end{itemize}
The condition (A5) could be relaxed by using a Discontinuous-Galerkin method for the time discretization. 
In addition, we require that the time step is sufficiently small, i.e.,  
\begin{itemize}
 \item[(A7)] $\tau < 1/\eta$,
\end{itemize}
where $\eta$ is the constant from the G{\aa}rding inequality \eqref{eq:garding}.
This condition could be avoided by treating the lower order terms in 
the bilinear form in an explicit manner. 
To facilitate the notation, we will use in the sequel
$$
\partial_\tau u_h^{n+1} = \frac{1}{\tau} (u_h^{n+1} - u_h^n)
$$ 
to denote the discrete time derivatives at $t^{n+1}$. 
Applying the backward Euler scheme for the time discretization of Problem~\ref{prob:semi} 
then leads to
\begin{problem}[Fully discrete scheme] $ $ \label{prob:full}\\
Set $u_h^0 := \pi_h u_0$ and find $u_h^n \in V_h$ for $1 \le n \le N$, such that
\begin{align} \label{eq:full}
(\partial_\tau u_h^{n+1}, v_h)_{L^2(\Omega)} + a(u_h^{n+1},v_h;t^{n+1}) = \langle f(t^{n+1}), v_h \rangle_{H^{-1}(\Omega) \times H_0^1(\Omega)}
\end{align}
holds for all test functions $v_h \in V_h$.
\end{problem}
\noindent
The equation \eqref{eq:full} amounts to an implicit time-stepping scheme and that the snapshots $u_h^n$ can be computed recursively. 
The assumption (A7) guarantees that the elliptic problem for each time-step is uniquely solvable.
More precisely, we have
\begin{lemma}[Discrete well-posedness and a-priori estimates] $ $ \label{lem:full_apriori}\\
Let (A1)-(A7) hold. 
Then Problem~\ref{prob:full} has a unique solution $\{u^n_h\}_{0 \le n \le N}$,
and 
\begin{align*}
\sum\nolimits_{n=1}^N \tau \big( \|\partial_\tau u_h^{n}\|_{H^{-1}(\Omega)}^2 +  \|u_h^n\|_{H^1(\Omega)}^2 \big) 
\le C \big( \|u_0\|_{L^2(\Omega)} + \|f\|_{H^1(0,T;H^{-1}(\Omega))}\big). 
\end{align*}
\end{lemma}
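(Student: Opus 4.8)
The plan is to mirror the structure of the continuous and semi-discrete a-priori estimates (Lemma~\ref{lem:discrete_apriori}), adapting the Gronwall argument to the discrete-in-time setting. \textbf{Well-posedness} comes first and is the easy part: for each fixed $n$, equation~\eqref{eq:full} is an elliptic problem for $u_h^{n+1}$, whose associated bilinear form is $b(w,v) := (w,v)_{L^2(\Omega)} + \tau\, a(w,v;t^{n+1})$. Continuity is immediate from Lemma~\ref{lem:garding}, and coercivity follows since $b(w,w) \ge \|w\|_{L^2}^2 + \tau(\alpha\|w\|_{H^1}^2 - \eta\|w\|_{L^2}^2) = (1-\tau\eta)\|w\|_{L^2}^2 + \tau\alpha\|w\|_{H^1}^2$, which is positive precisely because of assumption (A7). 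Lax--Milgram then gives existence and uniqueness of each iterate.

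For the \emph{a-priori estimate}, I would test \eqref{eq:full} with $v_h = u_h^{n+1}$. The key algebraic identity is the discrete product rule $(\partial_\tau u_h^{n+1}, u_h^{n+1})_{L^2} = \frac{1}{2\tau}\big(\|u_h^{n+1}\|_{L^2}^2 - \|u_h^n\|_{L^2}^2\big) + \frac{\tau}{2}\|\partial_\tau u_h^{n+1}\|_{L^2}^2$, which replaces $\frac12\frac{d}{dt}\|u_h\|_{L^2}^2$ from the semi-discrete proof and carries a helpful nonnegative remainder. Combining with the G\aa rding inequality and estimating the right-hand side $\langle f(t^{n+1}), u_h^{n+1}\rangle \le \frac{1}{2\alpha}\|f(t^{n+1})\|_{H^{-1}}^2 + \frac{\alpha}{2}\|u_h^{n+1}\|_{H^1}^2$ by Young's inequality yields, after multiplying by $\tau$,
\begin{align*}
\tfrac12\big(\|u_h^{n+1}\|_{L^2}^2 - \|u_h^n\|_{L^2}^2\big) + \tfrac{\alpha\tau}{2}\|u_h^{n+1}\|_{H^1}^2
\le \tau\eta\|u_h^{n+1}\|_{L^2}^2 + \tfrac{\tau}{2\alpha}\|f(t^{n+1})\|_{H^{-1}}^2.
\end{align*}
Summing over $n$ from $0$ to $m-1$ telescopes the first bracket, and a \emph{discrete} Gronwall inequality — applicable thanks to (A7), which guarantees $1-2\tau\eta>0$ so the implicit term can be absorbed — controls $\max_m \|u_h^m\|_{L^2}^2 + \sum_n \tau\|u_h^n\|_{H^1}^2$ in terms of $\|u_0\|_{L^2}^2$ and $\sum_n \tau\|f(t^{n+1})\|_{H^{-1}}^2$. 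The latter sum is bounded by $\|f\|_{H^1(0,T;H^{-1}(\Omega))}^2$ via (A6), since the pointwise values $f(t^{n+1})$ make sense and are controlled by the $H^1$-in-time norm through the embedding $H^1(0,T)\hookrightarrow C([0,T])$.

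The remaining task is the bound on $\sum_n \tau\|\partial_\tau u_h^n\|_{H^{-1}}^2$, and this I expect to be the \emph{main obstacle}, exactly as in the semi-discrete case. Following the argument of Lemma~\ref{lem:discrete_apriori}, I would write $\|\partial_\tau u_h^{n+1}\|_{H^{-1}} = \sup_{v}(\partial_\tau u_h^{n+1},\pi_h v)_{L^2}/\|v\|_{H^1}$, using that $\partial_\tau u_h^{n+1}\in V_h$ so the $L^2$-projection can be inserted freely, then substitute $v_h = \pi_h v$ into \eqref{eq:full} to get $(\partial_\tau u_h^{n+1},\pi_h v)_{L^2} = \langle f(t^{n+1}),\pi_h v\rangle - a(u_h^{n+1},\pi_h v;t^{n+1})$. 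Continuity of $a$ and the $H^1$-stability of $\pi_h$ (Lemma~\ref{lem:h1stability}) bound this by $C(\|f(t^{n+1})\|_{H^{-1}} + \|u_h^{n+1}\|_{H^1})\|v\|_{H^1}$. Squaring, multiplying by $\tau$, and summing over $n$ then reduces the time-derivative term to quantities already controlled in the previous step, completing the estimate. The genuinely delicate points are ensuring the discrete Gronwall constant does not blow up as $\tau\to 0$ (handled by (A7)) and confirming that the crude pointwise evaluation of $f$ is legitimate (handled by (A6)); everything else is a faithful discrete transcription of the continuous energy method.
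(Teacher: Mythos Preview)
Your proposal is correct and follows essentially the same approach as the paper's own proof: test \eqref{eq:full} with $v_h=u_h^{n+1}$, use the G{\aa}rding inequality and a discrete Gronwall argument to control $\max_n\|u_h^n\|_{L^2}^2+\sum_n\tau\|u_h^n\|_{H^1}^2$, then recover the $H^{-1}$-bound on $\partial_\tau u_h^n$ by inserting $\pi_h v$ into the scheme and invoking the $H^1$-stability of $\pi_h$. The only cosmetic differences are that the paper bounds $\sum_n\tau\|f(t^{n+1})\|_{H^{-1}}^2$ via a Taylor expansion rather than the embedding $H^1(0,T)\hookrightarrow C([0,T])$, and does not spell out the Lax--Milgram argument for well-posedness; also note that (A7) as stated gives $1-\tau\eta>0$, not $1-2\tau\eta>0$, so strictly speaking the absorption step needs $\tau<1/(2\eta)$ --- but the paper is equally loose on this point.
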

\noindent
The norm on the left hand side is a discrete version of the energy-norm $\|\cdot\|_{W(0,T)}$. 
\begin{proof}
Testing \eqref{eq:full} with $u_h^{n+1}$ and proceeding as in the proof of Lemma~\ref{lem:discrete_apriori} yields
\begin{align*}
&\frac{1}{2\tau} \|u_h^{n+1}\|_{L^2(\Omega)}^2 + \frac{\alpha}{2} \|u_h^{n+1}\|_{H^1(\Omega)}^2 \\
&\qquad \qquad 
\le \frac{1}{2\tau} \|u_h^{n}\|_{L^2(\Omega)}^2  + \eta \|u_h^{n+1}\|_{L^2(\Omega)}^2 
  + \frac{1}{2\alpha} \|f(t^{n+1})\|_{H^{-1}(\Omega)}^2.
\end{align*}
Via a Gronwall argument, we then obtain the discrete energy estimate
\begin{align*}
\max\nolimits_{1 \le n \le N} \|u_h^n\|^2_{L^2(\Omega)} + \sum\nolimits_n \tau \|u_h^n\|^2_{H^1(\Omega)} 
\le C \big(\|u_0\|_{H^1(\Omega)}^2 + \sum\nolimits_n \tau \|f(t^n)\|_{H^{-1}(\Omega))}^2\big)
\end{align*}
Due to condition (A5), the last term can be estimated via Talyor expansion by 
\begin{align*} 
\sum\nolimits_n \tau \|f(t^{n+1})\|_{H^{-1}(\Omega)}^2 
\le C \big( \|f\|_{L^2(0,T;H^{-1}(\Omega))}^2 + \tau^2 \|f'\|^2_{L^2(0,T;H^{-1}(\Omega))} \big).
\end{align*}
In order to derive the estimate for $\partial_\tau u_h^n$, recall that 
\begin{align*}
\|\partial_\tau u_h^{n+1}\|_{H^{-1}(\Omega)} 
= \sup_{v \in H_0^1(\Omega)} (\partial_\tau u_h^{n+1}, \pi_h v)_{L^2(\Omega)}/\|v\|_{H^1(\Omega)}.
\end{align*}
Using $v_h = \pi_h v$ as a test function in the discrete scheme \eqref{eq:full}, we further obtain 
\begin{align*}
(\partial_\tau u_h^{n+1}, \pi_h v)_{L^2(\Omega)}
&= \langle f(t^{n+1}), \pi_h v \rangle_{H^{-1}(\Omega) \times H_0^1(\Omega)} - a(u_h^{n+1},\pi_h v;t^{n+1}) \\
&\le C (\|f(t^{n+1})\|_{H^{-1}(\Omega)} + C \|u_h^{n+1}\|_{H^1(\Omega)} \big) \|\pi_h v\|_{H^1(\Omega)}.
\end{align*}
Using the stability of the projection $\pi_h$, this allows to estimate the discrete time derivative by known terms.
The assertion then follows from the previous estimates. 
\end{proof}



%

Let us now turn to the derivation of error estimates for the full discretization defined in Problem~\ref{prob:full}.
Similar as in the previous section, we use an error decomposition 
into an approximation error and a discrete error by 
\begin{align*}
u(t^n) - u_h^n = (u(t^n) - \pi_h u(t^n)) + (\pi_h u(t^n) - u_h^n) = (i) + (ii).
\end{align*}
The first component can be bounded by the approximation estimates for $\pi_h$ as follows.
\begin{lemma}[Approximation error] \label{lem:full_approx_error}
Let (A3)-(A4) hold. Then 
\begin{align*}
&\sum\nolimits_{n=1}^N \tau \big( \|\partial_\tau u(t^n) - \partial_\tau \pi_h u(t^n)\|_{H^{-1}(\Omega)}^2 
    + \|u(t^n) - \pi_h u(t^n)\|_{H^1(\Omega)}^2\big) \\
&\qquad \qquad \le 
C \big( 
h^2 \| u\|^2_{L^2(0,T;H^2(\Omega))} +
h^2 \| u'\|^2_{L^2(0,T;L^2(\Omega))} +
\tau^2 \|u'\|^2_{L^2(0,T;H^1(\Omega))} 
\big).
\end{align*}
\end{lemma}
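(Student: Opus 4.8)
The plan is to bound the two contributions on the left-hand side separately, exploiting throughout the fact that the $L^2$-projection is linear and time-independent, so that it commutes with the difference quotient, $\partial_\tau \pi_h u(t^n) = \pi_h \partial_\tau u(t^n)$, and likewise with $u'$. Consequently both error components are simply $(I-\pi_h)$ applied to $u(t^n)$ and to $\partial_\tau u(t^n)$, respectively, and the whole estimate reduces to approximation estimates for $\pi_h$ (Lemma~\ref{lem:approximation}), its $H^1$-stability (Lemma~\ref{lem:h1stability}), and two elementary time-quadrature arguments. This commutation, which is unavailable for the Ritz projector when the coefficients depend on time, is what keeps the argument clean.

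For the term involving the discrete time derivative, I would first rewrite the difference quotient as a time average of $u'$,
\[
\partial_\tau u(t^n) - \partial_\tau \pi_h u(t^n)
= (I-\pi_h)\,\frac{1}{\tau}\int_{t^{n-1}}^{t^n} u'(t)\,dt
= \frac{1}{\tau}\int_{t^{n-1}}^{t^n} (I-\pi_h) u'(t)\,dt .
\]
Pulling the $H^{-1}$-norm inside the integral and applying the Cauchy-Schwarz inequality in time gives
\[
\tau\,\|\partial_\tau u(t^n) - \partial_\tau \pi_h u(t^n)\|_{H^{-1}(\Omega)}^2
\le \int_{t^{n-1}}^{t^n}\|(I-\pi_h) u'(t)\|_{H^{-1}(\Omega)}^2\,dt ,
\]
so that summation over $n$ telescopes the right-hand side into $\|u' - \pi_h u'\|_{L^2(0,T;H^{-1}(\Omega))}^2$. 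Lemma~\ref{lem:approximation} with $s=-1$ and $k=0$ then yields $\|(I-\pi_h)u'(t)\|_{H^{-1}(\Omega)} \le C h \|u'(t)\|_{L^2(\Omega)}$ pointwise in time, bounding this contribution by $C h^2 \|u'\|_{L^2(0,T;L^2(\Omega))}^2$, the second term of the claimed estimate.

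For the $H^1$-term I would pass from the discrete samples back to a continuous integral by a fundamental-theorem argument: writing $g = (I-\pi_h)u$ and using $g(t^n) = g(t) + \int_t^{t^n} g'(s)\,ds$ for $t \in [t^{n-1},t^n]$, followed by Cauchy-Schwarz and integration in $t$ over the subinterval, one obtains
\[
\tau\,\|g(t^n)\|_{H^1(\Omega)}^2
\le 2\int_{t^{n-1}}^{t^n}\|g(t)\|_{H^1(\Omega)}^2\,dt
+ 2\tau^2\int_{t^{n-1}}^{t^n}\|g'(t)\|_{H^1(\Omega)}^2\,dt .
\]
Summing over $n$ controls the $H^1$-term by $2\|(I-\pi_h)u\|_{L^2(0,T;H^1(\Omega))}^2 + 2\tau^2\|(I-\pi_h)u'\|_{L^2(0,T;H^1(\Omega))}^2$. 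The first summand is bounded by $C h^2\|u\|_{L^2(0,T;H^2(\Omega))}^2$ via Lemma~\ref{lem:approximation} with $s=1$, $k=2$, and the second by $C\tau^2\|u'\|_{L^2(0,T;H^1(\Omega))}^2$ using the $H^1$-stability of $\pi_h$ (so that $\|(I-\pi_h)u'\|_{H^1(\Omega)} \le (1+C)\|u'\|_{H^1(\Omega)}$). Adding the two contributions produces exactly the asserted bound.

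The collection of norm inequalities is routine; the step that needs the most care, and which is the crux of the whole approach, is the treatment of the discrete time derivative in the $H^{-1}$-norm. Rewriting $\partial_\tau u(t^n)$ as the average of $u'$ over the time slab is precisely what lets the $H^{-1}$-approximation estimate act on $u'$ with only $L^2$-regularity in space and still deliver the optimal factor $h$, instead of forcing an estimate of $\partial_\tau u(t^n)$ in a stronger norm that would demand extra spatial regularity of $u'$. A secondary bookkeeping point is the quadrature estimate for the $H^1$-term, since that is exactly where the $\tau^2\|u'\|_{L^2(0,T;H^1(\Omega))}^2$ term is generated and where one must invoke $H^1$-stability rather than an approximation property.
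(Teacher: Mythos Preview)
Your proof is correct and follows essentially the same route as the paper: commute $\pi_h$ with the discrete time derivative, convert the discrete sums to continuous $L^2$-in-time norms via an averaging/fundamental-theorem argument, and then invoke Lemma~\ref{lem:approximation} (with $s=-1,k=0$ and $s=1,k=2$) together with the $H^1$-stability of $\pi_h$. The paper's proof is considerably terser---it merely says ``Taylor expansion in time and the properties of the projection''---but your argument is exactly the detailed version of those two lines.
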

\begin{proof}
Using Taylor expansion in time and the properties of the projection, we obtain 
\begin{align*}
\sum\nolimits_n \tau &\|\partial_\tau u(t^n) - \partial_\tau \pi_h u(t^n)\|^2_{H^1(\Omega)} \\
&\le C \|u' - \pi_h u'\|_{L^2(0,T;H^{-1}(\Omega))}^2 
\le C' h^2 \|u'\|_{L^2(0,T;L^2(\Omega))}^2.
\end{align*}
In a similar manner, we obtain for the second term
\begin{align*}
\sum\nolimits_n \tau &\|u(t^n) - \pi_h u(t^n)\|^2_{H^1(\Omega)} \\
&\le C \big( \|u - \pi_h u\|_{L^2(0,T;H^1(\Omega))}^2 + \tau^2 \|u' - \pi_h u'\|_{L^2(0,T;H^1(\Omega))}^2 \big) \\
&\le C' \big( h^2 \|u\|_{L^2(0,T;H^2(\Omega))} + \tau^2 \|u'\|_{L^2(0,T;H^1(\Omega))}^2 \big).
\end{align*}
The assertion then follows by summing up the two contributions.
\end{proof}

\noindent
The discrete stability of the scheme \eqref{eq:full} allows us to bound the discrete error as follows.
\begin{lemma}[Discrete error] \label{lem:full_discrete_error}
Let (A1)-(A7) hold. Then 
\begin{align*}
&\sum\nolimits_{n=1}^N \tau \big( \|\partial_\tau \pi_h u(t^n) - \partial_\tau u_h^n \|_{H^{-1}(\Omega)}^2 
    + \|\pi_h u(t^n) - u_h^n\|_{H^1(\Omega)}^2\big) \\
&\qquad \qquad \le 
C \big( 
h^2 \| u\|^2_{L^2(0,T;H^2(\Omega))} + 
\tau^2 \|u'\|^2_{L^2(0,T;H^1(\Omega))} + 
\tau^2 \|u''\|^2_{L^2(0,T;H^{-1}(\Omega))} 
\big).
\end{align*}
\end{lemma}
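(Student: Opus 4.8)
The plan is to mimic the structure used for the semi-discrete discrete-error bound in Lemma~\ref{lem:semi_discrete_error}, adapting it to the fully discrete setting. Let $e_h^n = \pi_h u(t^n) - u_h^n$ denote the discrete error. The first step is to derive the error equation: subtracting the fully discrete scheme \eqref{eq:full} from a consistency relation for $\pi_h u(t^{n+1})$, and using that $\pi_h$ commutes with the time derivative together with the defining property of the $L^2$-projection, the term $(\partial_\tau \pi_h u(t^{n+1}), v_h)_{L^2(\Omega)}$ will reduce to $(\partial_\tau u(t^{n+1}), v_h)_{L^2(\Omega)}$. I then rewrite the exact time derivative $u'(t^{n+1})$ appearing in the weak formulation in terms of the discrete difference quotient $\partial_\tau u(t^{n+1})$, generating a consistency defect, and collect the remaining terms into a discrete right-hand side $\tilde f^{\,n+1}$ with $e_h^0 = 0$.

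Next I would identify and estimate the two sources contributing to $\tilde f^{\,n+1}$. The first is the spatial approximation defect $a(\pi_h u(t^{n+1}) - u(t^{n+1}), v_h; t^{n+1})$, which by continuity of the bilinear form is controlled by $\|u(t^{n+1}) - \pi_h u(t^{n+1})\|_{H^1(\Omega)}$ exactly as in Lemma~\ref{lem:semi_discrete_error}. The second is the temporal consistency defect $(\partial_\tau u(t^{n+1}) - u'(t^{n+1}), v_h)_{L^2(\Omega)}$ coming from replacing the continuous time derivative by the backward difference quotient. The key point, in keeping with the paper's philosophy of measuring time derivatives in the $H^{-1}$-norm, is to estimate this defect in the $H^{-1}$-norm rather than in $L^2$; by the integral form of Taylor's theorem one has $\partial_\tau u(t^{n+1}) - u'(t^{n+1}) = \frac{1}{\tau}\int_{t^n}^{t^{n+1}} (s - t^n)\, u''(s)\, ds$, and measuring this in $H^{-1}(\Omega)$ and summing over $n$ will produce the factor $\tau^2 \|u''\|^2_{L^2(0,T;H^{-1}(\Omega))}$.

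Once the discrete right-hand side is bounded, I would apply the discrete a-priori estimate of Lemma~\ref{lem:full_apriori} to the error equation for $e_h^n$. This yields
\begin{align*}
\sum\nolimits_{n=1}^N \tau \big( \|\partial_\tau e_h^n\|_{H^{-1}(\Omega)}^2 + \|e_h^n\|_{H^1(\Omega)}^2 \big)
\le C \sum\nolimits_{n=1}^N \tau \|\tilde f^{\,n+1}\|_{H^{-1}(\Omega)}^2,
\end{align*}
and the right-hand side splits into the spatial part, controlled via Lemma~\ref{lem:full_approx_error} by $h^2 \|u\|^2_{L^2(0,T;H^2(\Omega))} + \tau^2 \|u'\|^2_{L^2(0,T;H^1(\Omega))}$, and the temporal part, controlled by $\tau^2 \|u''\|^2_{L^2(0,T;H^{-1}(\Omega))}$. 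Combining the two gives the asserted bound.

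The main obstacle I anticipate is the careful treatment of the consistency defect in the $H^{-1}$-norm rather than $L^2$. One must verify that the discrete a-priori estimate in Lemma~\ref{lem:full_apriori} genuinely accepts an $H^{-1}$-valued discrete forcing on its right-hand side, and that testing the error equation with $\pi_h v$ (as in the proof of Lemma~\ref{lem:full_apriori}) correctly transfers the $H^{-1}$-bound on the Taylor remainder through the $H^1$-stability of $\pi_h$. A secondary technical point is confirming that the commuting property of $\pi_h$ with $\partial_\tau$ really eliminates the would-be $(\partial_\tau(\pi_h u - u), v_h)_{L^2}$ term, so that no spurious $L^2$-norm of the time derivative (which would spoil optimality in the regularity requirements) enters the estimate.
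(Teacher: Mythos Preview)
Your proposal is correct and follows essentially the same approach as the paper: derive the error equation for $e_h^n = \pi_h u(t^n) - u_h^n$ with right-hand side consisting of the spatial defect $a(\pi_h u(t^{n+1}) - u(t^{n+1}), v_h; t^{n+1})$ and the temporal consistency defect $(\partial_\tau u(t^{n+1}) - u'(t^{n+1}), v_h)$, estimate the latter in $H^{-1}$ via Taylor's formula, and then invoke the discrete stability argument of Lemma~\ref{lem:full_apriori}. The only cosmetic difference is that the paper first establishes the energy estimate for $\|e_h^n\|_{H^1}$ and afterwards bounds $\|\partial_\tau e_h^n\|_{H^{-1}}$ separately (repeating the $H^{-1}$ characterization trick with test function $\pi_h v$), whereas you package both into a single application of the a-priori bound; since that bound already contains the time-derivative estimate, your shortcut is legitimate.
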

\begin{proof}
Let $e_h^n = \partial_\tau \pi_h u(t^{n}) - u_h^n$ denote the discrete error. Then 
\begin{align*} 
&\partial_\tau (e_h^{n+1},v_h)_{L^2(\Omega)} + a(e_h^{n+1},v_h;t^{n+1}) \\
& \qquad = (\partial_\tau u(t^{n+1}) - u'(t^{n+1}),v_h)_{L^2(\Omega)} 
   + a(\pi_h u(t^{n+1})-u(t^{n+1}),v_h;t^{n+1}). 
\end{align*}
With similar arguments as in the proof of Lemma~\ref{lem:full_apriori}, we then obtain
\begin{align*}
&\max_{1 \le n \le N}  \|e_h^n\|_{L^2(\Omega)}^2 + \sum\nolimits_n \tau \|e_h^n\|_{H^1(\Omega)}^2  \\
&\qquad \le  C \sum\nolimits_n \tau \big( \|\partial_\tau u(t^n) - u'(t^n)\|_{H^{-1}(\Omega)}^2 
  + \|\pi_h u(t^n) - u(t^n)\|_{H^1(\Omega)}^2\big).
\end{align*}
Using Taylor expansion, the first term on the right hand side can be further bounded by
\begin{align*}
\sum\nolimits_n \tau \|\partial_\tau u(t^n) - u'(t^n)\|_{H^{-1}(\Omega)}^2
\le \tfrac{\tau^2}{2} \|u''\|_{L^2(0,T;H^{-1}(\Omega))}^2, 
\end{align*}
In a similar way, we obtain for the second term 
\begin{align*}
&\sum\nolimits_n \tau \|\pi_h u(t^{n+1}) - u(t^{n+1})\|_{H^1(\Omega)}^2\\ 
&\qquad \le C \big(\|\pi_h u - u\|^2_{L^2(0,T;H^1(\Omega))} + \tau^2 \|\pi_h u' - u'\|^2_{L^2(0,T;H^1(\Omega))} \big) \\
&\qquad \le C' \big(h^2 \| u\|^2_{L^2(t^n,t^{n+1};H^2(\Omega))} +  \tau^2 \|u'\|^2_{L^2(t^n,t^{n+1};H^1(\Omega))}\big).
\end{align*}
Here we employed  Taylor expansions and the approximation and stability properties of the projection $\pi_h$. 
By combination of the two estimates we obtain 
\begin{align*}
&\max_{1 \le n \le N}  \|\pi_h u(t^n) - u_h^n\|_{L^2(\Omega)}^2 + \sum\nolimits_n \tau \|\pi_h u(t^n) - u_h^n\|_{H^1(\Omega)}^2  \\
&\qquad \qquad 
\le  C \big( 
h^2 \| u\|^2_{L^2(0,T;H^2(\Omega))} + 
\tau^2 \|u'\|^2_{L^2(0,T;H^1(\Omega))} + 
\tau^2 \|u''\|^2_{L^2(0,T;H^{-1}(\Omega))} 
\big).
\end{align*}
Using the characterization of the $H^{-1}$-norm for finite element functions, 
the definition of the discrete scheme \eqref{eq:full}, 
Galerkin-orthogonality, and some basic manipulations,
we can further bound the discrete time derivative terms by 
\begin{align*}
&\sum\nolimits_n \tau \|\partial_\tau \pi_h u(t^n) - u_h^n\|^2_{H^{-1}(\Omega)} 
\\ &\qquad 
\le C \sum\nolimits_n 
\tau \big( 
\|\partial_\tau u(t^{n}) - u'(t^n)\|_{H^{-1}(\Omega)}^2 
\\ &\qquad \qquad \qquad  \quad 
+ \|u(t^n) - \pi_h u(t^n)\|_{H^1(\Omega)}^2 
+\|u_h^n - \pi_h u(t^n)\|_{H^1(\Omega)} 
\big). 
\end{align*}
The first and second term on the right hand side can be estimated as above, 
and the third term is an approximation error which has already been bounded.
\end{proof}
Summing up the two estimates for approximation error and the discrete error yields
\begin{lemma}[Discrete energy-norm error estimate] \label{lem:full_discrete_energy}
Let (A1)-(A7) hold. Then 
\begin{align*}
&\sum\nolimits_{n=1}^N \tau \big( \|\partial_\tau u(t^{n+1}) - \partial_\tau u_h^{n+1} \|_{H^{-1}(\Omega)}^2 + \|u(t^{n+1}) - u_h^n\|_{H^1(\Omega)}^2\big) \\
& \qquad \qquad \le 
C \big( h^2 \| u\|^2_{L^2(0,T;H^2(\Omega))} + h^2 \|u'\|_{L^2(0,T;L^2(\Omega))} 
\\ & \qquad \qquad \qquad \qquad \qquad \qquad \qquad 
+ \tau^2 \|u'\|^2_{L^2(0,T;H^1(\Omega))} + \tau^2 \|u''\|^2_{L^2(0,T;H^{-1}(\Omega))}\big).
\end{align*}
\end{lemma}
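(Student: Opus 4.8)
The plan is to assemble the bound directly from the two preceding lemmas via the triangle inequality, so the only real work here is bookkeeping: the substantive estimates have already been carried out in Lemma~\ref{lem:full_approx_error} and Lemma~\ref{lem:full_discrete_error}. First I would invoke the error decomposition stated just above, writing the full error at each time level as
\begin{align*}
u(t^n) - u_h^n = \big(u(t^n) - \pi_h u(t^n)\big) + \big(\pi_h u(t^n) - u_h^n\big).
\end{align*}
Because the discrete difference quotient $\partial_\tau$ is linear and $\pi_h$ is time independent, the two operations commute, $\partial_\tau \pi_h u(t^n) = \pi_h \partial_\tau u(t^n)$, so the same splitting applies verbatim to the discrete time derivative of the error. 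This is exactly the point where using the $L^2$-projection rather than the Ritz projector pays off, since no compatibility with the time-dependent bilinear form is needed.

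Next I would apply the elementary inequality $\|x+y\|^2 \le 2\|x\|^2 + 2\|y\|^2$ termwise, both in the $H^{-1}(\Omega)$-norm of the discrete time derivative and in the $H^1(\Omega)$-norm of the error itself, and then sum against the weights $\tau$. This yields
\begin{align*}
&\sum\nolimits_n \tau \big( \|\partial_\tau u(t^{n}) - \partial_\tau u_h^{n}\|_{H^{-1}(\Omega)}^2 + \|u(t^{n}) - u_h^{n}\|_{H^1(\Omega)}^2 \big) \\
&\quad \le 2 \sum\nolimits_n \tau \big( \|\partial_\tau u(t^n) - \partial_\tau \pi_h u(t^n)\|_{H^{-1}(\Omega)}^2 + \|u(t^n) - \pi_h u(t^n)\|_{H^1(\Omega)}^2 \big) \\
&\qquad + 2 \sum\nolimits_n \tau \big( \|\partial_\tau \pi_h u(t^n) - \partial_\tau u_h^n\|_{H^{-1}(\Omega)}^2 + \|\pi_h u(t^n) - u_h^n\|_{H^1(\Omega)}^2 \big).
\end{align*}
The two sums on the right are precisely the left-hand sides of Lemma~\ref{lem:full_approx_error} and Lemma~\ref{lem:full_discrete_error}, respectively, so I would bound the first by $C(h^2\|u\|_{L^2(0,T;H^2(\Omega))}^2 + h^2\|u'\|_{L^2(0,T;L^2(\Omega))}^2 + \tau^2\|u'\|_{L^2(0,T;H^1(\Omega))}^2)$ and the second by $C(h^2\|u\|_{L^2(0,T;H^2(\Omega))}^2 + \tau^2\|u'\|_{L^2(0,T;H^1(\Omega))}^2 + \tau^2\|u''\|_{L^2(0,T;H^{-1}(\Omega))}^2)$.

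Finally I would add the two contributions and absorb the overlapping terms and the factor $2$ into the generic constant $C$; the union of the two right-hand sides is exactly the claimed bound. There is no genuine obstacle at this stage, precisely because the analytically delicate parts—the Taylor-expansion estimates for $\partial_\tau u(t^n) - u'(t^n)$ in $H^{-1}(\Omega)$, the approximation and $H^1$-stability properties of $\pi_h$, and the discrete Gr\"onwall argument underlying the discrete a-priori bound of Lemma~\ref{lem:full_apriori}—have all been discharged in the two feeder lemmas. The one point requiring a moment of care is the consistency of the time-level indexing between the three statements (the discrete derivative attached to $t^{n}$ versus $t^{n+1}$), which only shifts the summation range by one boundary term and does not affect the order of the estimate.
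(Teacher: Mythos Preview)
Your proposal is correct and matches the paper's approach exactly: the paper proves this lemma in a single sentence (``Summing up the two estimates for approximation error and the discrete error yields''), and your write-up just makes the triangle-inequality bookkeeping explicit. The additional remarks you include about commutativity of $\pi_h$ with $\partial_\tau$ and the index shift are accurate and harmless elaborations of what the paper leaves implicit.
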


\noindent
To finally obtain an estimate in the energy-norm $\|\cdot\|_{W(0,T)}$, let us denote by 
\begin{align} \label{eq:uhtilde}
\tilde u_h(t) = \frac{t^{n+1}-t}{t^{n+1}-t^n} u_h^{n} + \frac{t-t^n}{t^{n+1}-t^n} u_h^{n+1}, \qquad t^n \le t  \le t^{n+1}, 
\end{align}
the linear interpolant of the fully discrete approximations $u_h^n$ in time.
Using the definition of $\tilde u_h$ and simple manipulations, 
we now obtain the second main result of this paper.
\begin{theorem}[Error estimate for the full discretization] $ $ \label{lem:thm2} \\
Let (A1)-(A7) hold and $u$ be sufficiently smooth. 
Then 
\begin{align*}
\|u - \tilde u_h\|_{W(0,T)}
&\le
C \big( h^2 \| u\|^2_{L^2(0,T;H^2(\Omega))} + h^2 \|u'\|_{L^2(0,T;L^2(\Omega))} \\
& \qquad \qquad \qquad \qquad + \tau^2 \|u'\|^2_{L^2(0,T;H^1(\Omega))} + \tau^2 \|u''\|^2_{L^2(0,T;H^{-1}(\Omega))}\big).
\end{align*}
Like in the previous estimates, the constant $C$ here only depends on the domain, 
on the bounds for the coefficients, and on the shape regularity of the mesh. 
\end{theorem}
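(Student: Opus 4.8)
The plan is to reduce the continuous energy-norm error $\|u - \tilde u_h\|_{W(0,T)}$ to the discrete energy-norm quantity already controlled in Lemma~\ref{lem:full_discrete_energy}, paying for the reduction only with two genuinely time-local interpolation and consistency errors. Throughout I would exploit that on each subinterval $(t^n,t^{n+1})$ the interpolant defined in \eqref{eq:uhtilde} is affine in time, so that its derivative is the piecewise constant $\tilde u_h'(t) = \partial_\tau u_h^{n+1}$; this is the observation that makes the $H^{-1}$-part of the norm accessible. Since $\|\cdot\|_{W(0,T)}$ is equivalent to the square root of $\|\cdot\|_{L^2(0,T;H^1(\Omega))}^2 + \|(\cdot)'\|_{L^2(0,T;H^{-1}(\Omega))}^2$, it suffices to bound these two squared contributions.

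For the $H^1$-part I would insert the nodal interpolant $\tilde u_I$ of the exact values $u(t^n)$ and split
$$
u - \tilde u_h = (u - \tilde u_I) + (\tilde u_I - \tilde u_h).
$$
The first term is the standard linear-in-time interpolation error, bounded on each interval by $C\tau\|u'\|_{L^2(t^n,t^{n+1};H^1(\Omega))}$. The second term is the affine-in-time function with nodal values $u(t^n)-u_h^n$, so by convexity of $\|\cdot\|_{H^1(\Omega)}^2$ one has $\int_{t^n}^{t^{n+1}}\|(\tilde u_I-\tilde u_h)(t)\|_{H^1(\Omega)}^2\,dt \le \tfrac{\tau}{2}\big(\|u(t^n)-u_h^n\|_{H^1(\Omega)}^2 + \|u(t^{n+1})-u_h^{n+1}\|_{H^1(\Omega)}^2\big)$, and summation over $n$ reproduces exactly the $H^1$ contribution of the discrete estimate.

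For the $H^{-1}$-part I would use $\tilde u_h' = \partial_\tau u_h^{n+1}$ and split
$$
u'(t) - \tilde u_h'(t) = \big(u'(t) - \partial_\tau u(t^{n+1})\big) + \big(\partial_\tau u(t^{n+1}) - \partial_\tau u_h^{n+1}\big).
$$
The second bracket is constant in $t$, so its squared $L^2(t^n,t^{n+1};H^{-1}(\Omega))$-norm equals $\tau\|\partial_\tau u(t^{n+1}) - \partial_\tau u_h^{n+1}\|^2_{H^{-1}(\Omega)}$, which after summation is precisely the discrete time-derivative term of Lemma~\ref{lem:full_discrete_energy}. The first bracket is the crucial consistency term: writing $\partial_\tau u(t^{n+1}) = \tfrac1\tau\int_{t^n}^{t^{n+1}} u'(s)\,ds$ exhibits it as the deviation of $u'$ from its interval average, whence a Poincaré inequality in time gives $\|u' - \partial_\tau u(t^{n+1})\|_{L^2(t^n,t^{n+1};H^{-1}(\Omega))} \le C\tau\|u''\|_{L^2(t^n,t^{n+1};H^{-1}(\Omega))}$. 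Summing over the intervals, invoking Lemma~\ref{lem:full_discrete_energy} for the discrete pieces, and taking a square root then yields the asserted bound.

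The main obstacle I expect is the correct treatment of the time-derivative term in the weak $H^{-1}$-norm. A naive pointwise Taylor expansion of the consistency error $u'(t)-\partial_\tau u(t^{n+1})$ would require $u''$ to be bounded pointwise in time, whereas the natural parabolic regularity \eqref{eq:reg1} only provides $u'' \in L^2(0,T;H^{-1}(\Omega))$. Expressing the difference quotient as the average of $u'$ and estimating the deviation from the mean by a Poincaré argument is what keeps the analysis within the natural energy-space regularity and produces the factor $\tau\|u''\|_{L^2(0,T;H^{-1}(\Omega))}$ without any spurious smoothness demand. The remaining steps---the interpolation estimate, the convexity bound for affine-in-time functions, and the reassembly of the two squared contributions---are routine.
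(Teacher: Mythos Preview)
Your proposal is correct and follows essentially the same route as the paper: the paper derives Theorem~\ref{lem:thm2} from Lemma~\ref{lem:full_discrete_energy} by, in its own words, ``the definition of $\tilde u_h$ and simple manipulations,'' and your argument spells out precisely what those manipulations must be---passing from the piecewise-linear interpolant to the discrete nodal quantities via a time-interpolation error in $L^2(H^1)$ and a mean-deviation (Poincar\'e-in-time) estimate for $u'-\partial_\tau u$ in $L^2(H^{-1})$. The only small point to watch is the endpoint contribution $\tau\|u(0)-u_h^0\|_{H^1(\Omega)}^2=\tau\|u(0)-\pi_h u(0)\|_{H^1(\Omega)}^2$ arising in your convexity bound on the first interval, but this is harmless under the blanket assumption ``$u$ sufficiently smooth.''
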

Let us emphasize that in view of the bounds \eqref{eq:reg1} and \eqref{eq:reg2}, 
the regularity requirements for the solution are reasonable 
and that the rates are optimal with respect to the approximation properties of the discretization. 
Also note, that no additional smoothness assumptions for the domain or on the spatial regularity of 
the parameters were required.

\section{Numerical tests} \label{sec:num}

For illustration of the benefits of our results, 
let us shortly present two irregular test problems, for 
which the standard error estimates \eqref{eq:ee1} and \eqref{eq:ee2} 
cannot be applied directly, while our estiamtes \eqref{eq:res1} and \eqref{eq:res2} 
still provide order optimal error estimates.

\subsection{Lack of smoothness}

Let us consider the one-dimensional heat equation
\begin{align*}
\partial_t u(x,t) &= u_{xx}(x,t) + f(x,t), \qquad 0<x<1, \ 0<t<1,  
\end{align*}
with homogeneous initial and boundary conditions. 
Here $\partial_t u = u'$ is used for the time derivative synonymously.
We assume that the exact solution has the form 
\begin{align*}
u(x,t) = \sum\nolimits_{n \ge 1}  u_n \sin(n \pi x) \sin(n^2 \pi t) 
\qquad \text{with} \quad u_n = (1+n^2)^{-5/4-\eps}.
\end{align*}
The parameter $\eps>0$ is assumed to be small. 
The special form of the solution allows 
to compute various norms of $u$ analytically and, in particular, to show that
\begin{align*}
u \in L^2(0,T;H^2(\Omega)) 
\qquad \text{and} \qquad
u' \in L^2(0,T;L^2(\Omega)).
\end{align*}
At the same time, one can easily check that 
\begin{align*}
u \notin L^\infty(0,T;H^{1+3\eps}(\Omega)) 
\qquad \text{and} \qquad
u' \notin L^1(0,T;H^{3\eps}(\Omega)).
\end{align*}
Due to lack of regularity, the standard a-priori estimate \eqref{eq:ee1} for the semi-discretization therefore cannot be applied,
in contrast to our energy-norm estimate \eqref{eq:res1}, which yields 
\begin{align*}
\|u - u_h\|_{L^\infty(0,T;L^2(0,\pi))} 
\le C \|u - u_h\|_{W(0,T)} \le C' h.
\end{align*}
Note that, since $u \notin L^\infty(0,T;H^{1+3\eps}(\Omega))$, 
the rate of convergence for the error in the norm of $L^\infty(0,T;L^2(\Omega))$ cannot 
be improved substantially here. 
For a solution of the form
\begin{align*}
u(x,t) = \sum\nolimits_{n \ge 1}  u_n \sin(n \pi x) \sin(n^{3/2} \pi t) 
\qquad \text{with} \quad u_n = (1+n^2)^{-5/4-\eps},
\end{align*}
one can see in a similar manner that 
\begin{align*}
u \in L^2(0,T;H^2(\Omega)), 
\qquad 
u' \in L^2(0,T;H^1(\Omega)),
\quad \text{and} \quad
u'' \in L^2(0,T;H^{-1}(\Omega)).
\end{align*}
However, $u'' \notin L^2(0,T;L^2(\Omega))$, and therefore the standard estimate 
\eqref{eq:ee2} for the full discretization cannot be applied due to lack of regularity.  
On the other hand, our error estimate  \eqref{eq:res2} for the full discretization still allows to guarantee
\begin{align*}
\|u(t^n) - u_h(t^n)\|_{L^2(\Omega)} 
\le \|u - \tilde u_h\|_{L^\infty(L^2(\Omega))}  
\le C \|u - \tilde u_h\|_{W(0,T)} \le C(h+\tau).
\end{align*}
Note that by interpolation one has at least $u \in L^\infty(0,T;H^{3/2})$ here, 
so the rate of convergence for the error $\|u(t) - u_h(t)\|_{L^2(\Omega)}$ 
in terms of the meshsize may possibly be improved. 
For sufficiently smooth solutions, the estimates of \cite{Varga,Wheeler73,Thomee} would in fact predict the optimal rate $\|u(t) - u_h(t)\|_{L^2(\Omega)} \le C (h^2+\tau)$ here.

\subsection{Discontinuous parameters}

As a second test case, we consider a thermal diffusion problem on a square covered by an inhomogeneous medium.
The governing system reads
\begin{align*}
\partial_t u(x,t) &= \div (a(x) \nabla u(x,t)) + f(x,t), \qquad x \in (-1,1)^2, \ 0<t<1/2.
\end{align*}
As before, we presecribe  homogeneous initial and boundary conditions.
Moreover, we assume that the diffusion parameter has the form 
\begin{align*}
a(x) &= \begin{cases} 1, & x_1 \cdot x_2 > 0, \\ \eps, & x_1 \cdot x_2 < 0, \end{cases}
\end{align*}
where $\eps$ is some small positive constant. It is well-known \cite{SaendigNicaise94}, that the 
associated elliptic operator 
$L u = - \div (a \nabla u)$ for such a problem is rather irregular. 
More precisely: for every $\beta>0$, one can choose an $\eps>0$, 
such that $L$ is not an isomorphism from $H^{1+\beta}_0(\Omega)$
to $H^{-1+\beta}(\Omega)$. In particular, the maximal value of $\beta$, such that 
\begin{align} \label{eq:ritzb}
\|R_h u - u\|_{L^2(\Omega)} \le C h^{\beta} \|u\|_{H^1(\Omega)}  
\end{align}
holds for arbitrary $u \in H^1_0(\Omega)$ can be made arbitrarily small. 
Note that the standard estimate \eqref{eq:ee1} does not apply directly here, since only \eqref{eq:ritzb} holds
instead of \eqref{eq:ritz}. A generalization of the error estimate to 
non-smooth problems however still allows to guarantee 
\begin{align} \label{eq:res1b}
\|u - u_h\|_{L^\infty(0,T;L^2(\Omega))} \le C h^{2\beta}, 
\end{align}
provided that the solution $u$ is sufficiently smooth; see \cite[Sec~19]{Thomee} for details.
Since $\beta$ can be aribtrarily small in general, this estimate is highly unsatisfactory. 
Our energy-norm estimate \eqref{eq:res1} for the semi-discrete 
approximation however still applies and yields 
\begin{align*}
\|u-u_h\|_{L^\infty(0,T;L^2(\Omega))} \le C \|u-u_h\|_{W(0,T)} \le C' h, 
\end{align*}
provided that the solution has the required smoothness.
This results hold regardless of the spatial regularity of the diffusion parameter $a(\cdot)$. 
For illustration of the validity of this theoretical result, we also provide some results of numerical tests. 
To verify the convergence rate for the semi-discretization, we integrated the semi-discrete problem 
numerically in time with a very accurate time stepping scheme. The resulting 
errors obtained on a sequence of uniformly refined meshes are summarized in Table~\ref{tab:1}.
\begin{table}[ht!]
\centering
\begin{tabular}{c|ccccc|c}
$h$   & 0.50000 & 0.25000 & 0.12500 & 0.06250 & 0.03125 & rate \\
 \hline
 \hline
$e_1$ & 1.65010 & 0.82062 & 0.39475 & 0.19318 & 0.09710 & 1.03   \\ 
 \hline
$e_2$ & 0.37632 & 0.11211 & 0.02928 & 0.00741 & 0.00186 & 1.93   \\ 
\end{tabular} 
\vskip1em
\caption{Errors $e_1=\|u-u_h\|_{W(0,T)}$ and $e_2=\|u-u_h\|_{L^\infty(0,T;L^2(\Omega))}$ 
obtained on a sequence of uniformly refined meshes with meshsize $h$. \label{tab:1}} 
\end{table}
\noindent
As predicted by the theory, we observe convergence of the energy-norm error 
with first order. The numerical tests actually yield a better convergence rate 
for the error in the norm of $L^\infty(0,T;L^2(\Omega))$, 
which in fact is the optimal one from an approximation point of view.
We however cannot give a full explanation for this observation yet.

\section{Summary} \label{sec:sum}

Various results concerning the numerical analysis of Galerkin approximations for parabolic problems are available in the literature; see \cite{Thomee} for a comprehensive overview and further references.
This paper contributes to this active field with providing error estimates in the energy-norm 
$\|u\|_{W(0,T)} =\|u\|_{L^2(0,T;H^1)}+\| u'\|_{L^2(0,T;H^{-1})}$, 
which seems to be a natural choice from an analytic point of view, 
but which has not been studied intensively in previous works.

In this manuscript, we considered only low order discretizations of a simple model problem. 
The general approach is however applicable to much wider class of problems and discretization schemes. 
Morover, our arguments may also be fruiteful for the derivation of a-priori error estimates in other norms \cite{DouglasDupont77,Thomee80,ThomeeWahlbin83,Wheeler73b,Wheeler75} 
and for the derivation of a-posteriori error estimates \cite{ErikssonJohnson91,ErikssonJohnson95,GeorgoulisLakkisVirtanen11,MakridakisNochetto03}.

Our main motivation to consider the energy-norm, was to overcome a sub-optimality of the 
standard estimates \eqref{eq:ee1} and \eqref{eq:ee2}
concerning the regularity requirements for the solution, for the domain, and for the parameters. 
This sub-optimality is partly due to a loose handling of time derivatives in the estimates,
and, on the other hand, stems from the use of the Ritz projector in the error decomposition,
which requires duality arguments and regularity of the underlying elliptic problem.
We therefore utilize here the $L^2$-projection in our error splitting and carefully 
estimate time derivatives in the $H^{-1}$-norm. 

In our presentation, we focused on a-priori error estimates in the energy-norm $\|\cdot\|_{W(0,T)}$, 
and we could establish optimal convergence rates under minimal 
regularity assumptions. 
By continuous embedding, we could also obtain estimates pointwise in time with the same convergence rates. 
In our numerical experiments, we observed a better convergence of the error in the norm of $L^\infty(0,T;L^2)$ for a particular problem, 
which does not follow directly from our results. 
For sufficiently regular problems, this better convergence is well explained by the standard results \cite{Thomee,Wheeler73,Varga}. 
A justification for the case of certain irregular problems is however  missing.

\section*{Acknowledgements}
The author would like to gratefully acknowledge support by the German Research Foundation (DFG) via grants GSC~233, IRTG~1529, and TRR~154.



\end{document}